\date{}
\theoremstyle{plain}
\newtheorem{thm}{Theorem}[section]
\newtheorem{lem}[thm]{Lemma}
\newtheorem{pro}[thm]{Proposition}
\theoremstyle{remark}
\theoremstyle{definition}
    \newcommand{\rmnum}[1]{\romannumeral #1}
    \newcommand{\Rmnum}[1]{\expandafter\@slowromancap\romannumeral #1@}
\numberwithin{equation}{section}
\begin{document}
\title
{Orlicz valuations}

\author[]{Jin Li$^{1,2}$}
\author[]{Gangsong Leng$^1$}
\address[address1]{Department of Mathematics, Shanghai University, Shanghai 200444, China}
\address[address2]{Institut f\"{u}r Diskrete Mathematik und Geometrie, TU Wien, Wien 1040 Austria}
 \email[Jin Li]{\href{mailto: Jin Li
<lijin2955@gmail.com>}{lijin2955@gmail.com}}
\email[Gangsong Leng]{\href{mailto: Gangsong Leng
<lenggangsong@163.com>}{lenggangsong@163.com}}

\begin{abstract}
In this paper, Orlicz valuations compatible with $SL(n)$ transforms are classified. Unlike their $L_p$ analogs, the identity \text{operator} and the reflection operator are the only $SL(n)$ \text{compatible} Orlicz valuations (up to dilations). It turns out that the Orlicz projection body operator, the Orlicz centroid body operator and the Orlicz difference body operator are not Orlicz valuations. The property that the Orlicz difference body operator is not an Orlicz valuation plays an important role in characterizing the identity operator and the reflection operator.
\end{abstract}

\subjclass[2010]{52A20, 52B45}

\keywords{Orlicz addition, Orlicz valuations, $SL(n)$ contravariance, $SL(n)$ covariance, Cauchy functional equation}

\thanks{}

\maketitle
\section{Introductions}
The Brunn-Minkowski theory, which merges two elementary notions for sets in Euclidean space, vector addition and volume, is the core of convex geometry. For a comprehensive introduction to the Brunn-Minkowski theory, see Schneider \cite{Sch14} and Gardner \cite{Gar02}. During the last few decades, the $L_p$ analog, the $L_p$ Brunn-Minkowski theory, was developed  by Lutwak, Yang, and Zhang, and many others; see \cite{HS09a,HS09b,Lut93,Lut96,LYZ00b,LYZ00a,LYZ02a,LYZ04a,LYZ05}.

Let $\mathcal {K}_o ^n$ be the set of convex bodies (i.e., compact convex sets in $\mathbb{R}^n$) which contain the origin and $\mathcal {P}_o ^n$ be the set of polytopes in $\mathbb{R}^n$ which contain the origin.

For $1 \leq p \leq \infty$ and arbitrary $K,L \in \mathcal {K}_o ^n$, the \emph{$L_p$ Minkowski sum} of $K$ and $L$ is defined by
\begin{align}\label{205}
h_{K +_p L}(x)^p = h_K (x) ^p + h_L (x)^p
\end{align}
for any $x \in \mathbb{R}^n$, where $h_K$ denotes the support function of $K$ (defined in Section 2). When $p = \infty$, the definition (\ref{205}) should be interpreted as $h_{K +_p L}(x) = \max \{h_K (x) , h_L (x)\}$. When $p=1$, the definition (\ref{205}) gives the ordinary Minkowski addition, and $K,L$ need not contain the origin.

An \emph{$L_p$ Minkowski valuation} is a function $Z : \mathcal{P}_o ^n \to \langle \mathcal {K}_o ^n, +_p \rangle$ such that
\begin{align*}
    Z(K \cup L) +_p Z(K \cap L) = ZK +_p ZL,
\end{align*}
whenever $K,L,K \cup L,K \cap L \in \mathcal{P}_o ^n$. Here $\langle \mathcal {K}_o ^n,+_p \rangle$ denotes that $\mathcal {K}_o ^n$ is equipped with $L_p$ Minkowski addition. For $1 \leq p < \infty$, the $L_p$ Minkowski valuations were characterized as moment bodies, difference bodies and projection bodies by Ludwig \cite{Lud05} for $GL(n)$ compatible valuations and Haberl \cite{Hab12b}, Parapatits \cite{Par14a,Par14b} for $SL(n)$ compatible valuations.

A map $Z: \mathcal{K}_o ^n \to \mathfrak{P}(\mathbb{R}^n)$, the power set of $\mathbb{R}^n$, is called $SL(n)$ contravariant if
$$Z \psi K = \psi ^{-t} ZK$$
for any $K \in \mathcal{K}_o ^n$ and any $\psi \in SL(n)$. The map $Z$ is called $SL(n)$ covariant if
$$Z \psi K = \psi ZK$$
for any $K \in \mathcal{K}_o ^n$ and any $\psi \in SL(n)$.

Notice that $\{ o \}$ is the only invariant set of $\mathbb{R}^n$ under any $SL(n)$ transforms. Thus if $Z$ is $SL(n)$ contravariant (or covariant), then
\begin{align}\label{37}
Z \{ o \} = \{ o \}.
\end{align}

The classification theorem of Haberl \cite{Hab12b} and Parapatits \cite{Par14a} for $SL(n)$ contravariant $L_p$ Minkowski valuations can be written as
\begin{thm}[Haberl \cite{Hab12b} and Parapatits \cite{Par14a}]\label{Lpcontra}
Let $n \geq 3$. A map $Z : \mathcal{P}_o ^n \to \langle \mathcal {K}_o ^n, + \rangle$ is an $SL(n)$ contravariant Minkowski valuation if and only if there exist constants
$c_1,c_2,c_3 \in \mathbb{R}$ with $c_1 \geq 0$ and $c_1 +c_2 +c_3 \geq 0$ such that
$$ZP = c_1 \Pi P + c_2 \Pi_o P + c_3 \Pi_o (-P)$$
for all $P \in \mathcal{P}_o ^n$.

For $1 < p < \infty$, a map $Z : \mathcal{P}_o ^n \to \langle \mathcal {K}_o ^n, +_p \rangle$ is an $SL(n)$ contravariant $L_p$ Minkowski valuation if and only if there exist constants
$c_1,c_2 \geq 0$ such that
$$ZP = c_1 \hat{\Pi}_p^+ P +_p c_2 \hat{\Pi}_p ^- P$$
for all $P \in \mathcal{P}_o ^n$.
\end{thm}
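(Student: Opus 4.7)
The plan is to follow the classification strategy used throughout the Ludwig-Haberl-Parapatits program: use the valuation property together with $SL(n)$ contravariance to reduce to the standard simplex, pin down the shape of $ZT$ there, and extract the remaining coefficients from a Cauchy-type functional equation.

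First I would let $T = [o, e_1, \ldots, e_n]$ be the standard simplex with one vertex at the origin. The symmetries of $T$ in $SL(n)$ --- in particular, all permutations of $e_1, \ldots, e_n$ and the order-$(n+1)$ cyclic symmetry that swaps the origin with another vertex (composed with a suitable rescaling so as to stay in $SL(n)$) --- act on $ZT$ via contravariance and force its support function to be highly symmetric. A detailed analysis of this symmetry group then shows that $ZT$ is of the form $c^+ \hat{\Pi}_p^+ T +_p c^- \hat{\Pi}_p^- T$ when $1<p<\infty$ (respectively $c_1 \Pi T + c_2 \Pi_o T + c_3 \Pi_o(-T)$ when $p=1$), with the coefficients still to be determined. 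Along the way one checks the preliminary fact that $Z$ vanishes on lower-dimensional polytopes through $o$, using the observation (\ref{37}) together with contravariance applied to suitable shears.

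Next, to pin down the coefficients, I would consider $SL(n)$-transformations of the form $\psi = \mathrm{diag}(\lambda, \lambda^{-1/(n-1)}, \ldots, \lambda^{-1/(n-1)})$, which have determinant one and rescale $T$ anisotropically. Contravariance relates $Z(\psi T)$ to $\psi^{-t} ZT$, and combining this with the $L_p$ valuation identity applied to the bisection of $T$ by a hyperplane through $o$ (whose two pieces are $SL(n)$-images of scaled copies of $T$) yields a Cauchy-type functional equation for the coefficients as functions of $\lambda$. The requirement that the output lie in $\mathcal{K}_o^n$, together with $Z\{o\} = \{o\}$, restricts the solutions to the stated constants, and the inequalities $c_1 \geq 0$ and $c_1 + c_2 + c_3 \geq 0$ (resp.\ $c_1, c_2 \geq 0$) are precisely what is needed for the resulting set to be a convex body.

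The main obstacle is the propagation from simplices to arbitrary $P \in \mathcal{P}_o^n$. I would proceed by induction on the number of vertices of $P$: at each step, slice $P$ by a hyperplane $H$ (chosen so that $H \cap P \in \mathcal{P}_o^n$) to split it into two polytopes with strictly fewer vertices, apply the $L_p$ valuation identity, and invoke the inductive hypothesis. The subtlety is that simplices appearing in these triangulations carry orientations whose contributions sort into the $\hat{\Pi}_p^+$ and $\hat{\Pi}_p^-$ summands respectively --- this is precisely why two (respectively three, for $p=1$) independent parameters survive in the final formula, and it is here that one must be most careful with the inclusion-exclusion bookkeeping since $L_p$ addition is not cancellative for $p>1$. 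One concludes by directly verifying that the proposed maps are indeed $L_p$ Minkowski valuations and $SL(n)$ contravariant, which is a routine check from the integral representations of $\Pi$, $\Pi_o$, and $\hat{\Pi}_p^\pm$.
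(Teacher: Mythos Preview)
The paper does not contain its own proof of this statement: Theorem~\ref{Lpcontra} is quoted as an established result of Haberl and Parapatits and is used only as background to motivate the paper's new Orlicz theorems. There is therefore nothing in the paper against which to compare your sketch.

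That said, your outline contains a genuine error worth flagging. You invoke ``the order-$(n+1)$ cyclic symmetry that swaps the origin with another vertex (composed with a suitable rescaling so as to stay in $SL(n)$)''. No such element of $SL(n)$ exists: any map sending the origin to a nonzero vertex of $T^n$ is affine but not linear, and rescaling cannot repair this. The $SL(n)$-symmetries of $T^n=[o,e_1,\dots,e_n]$ are exactly the coordinate permutations, which is a much smaller group than you are assuming. Consequently the symmetry argument alone cannot force $ZT^n$ into the asserted form; one really does need the hyperplane-bisection / Cauchy-equation machinery (as in the paper's own Lemma~\ref{lem3.1} and Theorem~\ref{thm3.2} for the Orlicz case) already at the level of determining $ZT^n$, not merely to fix coefficients afterwards. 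Your diagonal maps $\psi=\mathrm{diag}(\lambda,\lambda^{-1/(n-1)},\dots)$ likewise do not send $T^n$ to a dilate of itself, so that step does not produce the functional equation you want either; the correct move is to bisect $sT^n$ by the hyperplane $H_\lambda$ with normal $(1-\lambda)e_1-\lambda e_2$, whose two halves \emph{are} $SL(n)$-images of dilates of $T^n$.
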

Here $\Pi$ is the projection body operator, $\Pi_o$, $\hat{\Pi}_p^+$ and $\hat{\Pi}_p^-$ are the generalizations of the $L_p$ projection body operator; see Section 2.

The classification theorem of Haberl \cite{Hab12b} and Parapatits \cite{Par14b} for $SL(n)$ covariant $L_p$ Minkowski valuations can be written as
\begin{thm}[Haberl \cite{Hab12b} and Parapatits \cite{Par14b}]\label{lpco}
Let $n \geq 3$ and $1 \leq p < \infty$. A map $Z : \mathcal{P}_o ^n \to \langle \mathcal {K}_o ^n, +_p \rangle$ is an $SL(n)$ covariant $L_p$ Minkowski valuation which is continuous at the line segment $[o,e_1]$ if and only if there exist constants
$c_1,\cdots,c_4 \geq 0$ such that
$$ZP = c_1 P +_p c_2 (-P) +_p c_3 M_p^+ P +_p c_4 M_p ^- P$$
for all $P \in \mathcal{P}_o ^n$.
\end{thm}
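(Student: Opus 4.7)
The plan is to prove the nontrivial direction by reducing $Z$ to its action on simplices with one vertex at the origin, and then extending to all polytopes in $\mathcal{P}_o^n$ via a dissection argument. The forward implication is routine: each of $P$, $-P$, $M_p^+ P$, $M_p^- P$ is a well-known $SL(n)$ covariant $L_p$ Minkowski valuation (the valuation property coming from the identity $h_{K\cup L}^p + h_{K\cap L}^p = h_K^p + h_L^p$ whenever $K \cup L$ is convex), continuity at $[o,e_1]$ is inherited from these operators, and nonnegative $L_p$ linear combinations preserve all of these properties.

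For the converse, I would first use $SL(n)$ covariance together with (\ref{37}) to obtain $Z\{o\}=\{o\}$, and then determine $Z$ on the segment $[o,e_1]$. The stabilizer of $[o,e_1]$ in $SL(n)$ contains every map fixing $e_1$ and acting as an $SL(n-1)$ on $e_1^{\perp}$, together with shears, so since $n \geq 3$ the $SL(n)$ covariance forces $Z[o,e_1]$ to lie on the $e_1$-axis. Using covariance against dilations along $e_1$ (and a matching contraction in the hyperplane $e_1^{\perp}$) produces a scalar coefficient function of the segment length, and the continuity hypothesis at $[o,e_1]$ will pin this function down once additivity is established. The next step is to analyze $Z$ on the standard simplex $T=[o,e_1,\dots,e_n]$ and simplices sharing a facet with it: the stabilizer of $T$ in $SL(n)$ is large enough (permuting $e_1,\dots,e_n$ and admitting reflections through facets) to restrict the support function of $ZT$ to a short list of parameters attached to the vertices $o,e_1,\dots,e_n$.

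The crux is then to transport $ZT$ under arbitrary $SL(n)$ transformations, dissect simplices by hyperplanes through $o$ using the $L_p$ valuation identity, and derive functional equations for the unknown coefficient functions. Concretely, splitting a rescaled simplex $rT$ by a hyperplane into two subsimplices and writing the valuation equation at the level of $p$-th powers of support functions yields additive Cauchy-type equations; the continuity at $[o,e_1]$ then upgrades the additive solutions to linear ones, which forces each coefficient to be a nonnegative constant multiple of one of $h_P^p$, $h_{-P}^p$, $h_{M_p^+ P}^p$, $h_{M_p^- P}^p$ in the corresponding direction. The main obstacle I anticipate is ruling out any further terms in this list: this uses the nonnegativity required by $+_p$ (so that no cancellations produce spurious bodies) and a careful exploitation of the orbit structure of origin-containing simplices under $SL(n)$, a step where the assumption $n \geq 3$ is essential. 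Once $Z$ is determined on all simplices with a vertex at the origin, a standard induction on the number of vertices of $P \in \mathcal{P}_o^n$, combined with the $L_p$ valuation property applied to triangulations of $P$ rooted at $o$, extends the formula to the whole of $\mathcal{P}_o^n$ and completes the classification.
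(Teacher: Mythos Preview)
The paper does not prove this statement at all: Theorem~\ref{lpco} is quoted as a known classification result of Haberl \cite{Hab12b} and Parapatits \cite{Par14b}, and is used only as background and motivation for the Orlicz case treated later. There is therefore no proof in the present paper to compare your proposal against.

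Your outline is, in broad strokes, consonant with the strategy employed in the original Haberl--Parapatits papers (reduce to simplices with a vertex at the origin via $SL(n)$ covariance, derive Cauchy-type functional equations from the valuation property applied to hyperplane dissections, and then extend by triangulation). A couple of points in the sketch are loose, however. First, the stabilizer of $T^n$ in $SL(n)$ does \emph{not} contain ``reflections through facets'' in general (such reflections have determinant $-1$); what one actually uses is the symmetric group acting by coordinate permutations (even permutations directly, odd ones combined with a sign change to stay in $SL(n)$) and certain shears. Second, the phrase ``continuity at $[o,e_1]$ upgrades additive solutions to linear ones'' glosses over the key technical content: in the actual argument one obtains Cauchy equations in a dilation parameter, and it is precisely the continuity hypothesis at the segment that rules out pathological non-measurable solutions and forces the power-law form. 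Finally, the extension step is not merely ``induction on the number of vertices'' but an induction on the complexity class $\mathcal{P}_i$ of polytopes built from $\mathcal{T}_o^n$ by successive unions with disjoint relative interiors, exactly as in the proofs of Theorems~\ref{thm3.2} and~\ref{thm4.6} of the present paper. These are refinements rather than fatal gaps, but the sketch as written would need substantial expansion before it constitutes a proof.
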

Here $M_p^+$, $M_p^-$ are the asymmetric $L_p$ moment body operators; see Section 2.

Initiated by Dehn's solution to Hilbert's third question, valuation theory was first systematically investigated by Hadwiger. His fundamental classification theorem, which characterizes the linear combinations of the intrinsic volumes as the continuous, rigid motion invariant (real-valued) valuations, has many beautiful applications in integral geometry and geometric probability; see Klain and Rota's book \cite{KR97}. For recent variants of Hadwiger's theorem, see \cite{Ale99,Ale01,Ber09a}. Even before \text{Hadwiger}, Blaschke studied $SL(3)$ invariant valuations in $\mathbb{R}^3$. For recent results on $SL(n)$ invariant valuations, see \cite{LR99,Lud02a,LR10,HP14b}.

The first result on convex bodies valued valuations was obtained by Schneider \cite{Sch74} in the 1970s. During the last few decades, after a series of papers by Ludwig \cite{Lud02b,Lud03,Lud05,Lud06}, convex bodies valued valuations were studied quickly, see \cite{Hab2008,Hab09,Hab12a,Hab12b,HL06,HS09a,HS09b,LYL2015,Lud10b,ober2014,Par14a,Par14b,PS12,SS06,Sch08,Sch2010,SW12,Tsa12} and also Ludwig's survey \cite{Lud2006}.

The Orlicz Brunn-Minkowski theory introduced by Lutwak, Yang, and Zhang \cite{LYZ10b}, \cite{LYZ10a} gained momentum after Gardner, Hug, and Weil \cite{GHW14} introduced an appropriate Orlicz addition (the following definition (\ref{3})) using the Orlicz norm and established the Orlicz Brunn-Minkowski inequality. A little \emph{weaker} but useful definition for Orlicz addition was also provided by Xi, Jin and Leng \cite{XJL14} and \text{independently} in \cite{GHW14}; see following definition (\ref{11}). For the Dual Orlicz-Brunn-Minkowski Theory, see \cite{GHWY,JYL2014,ZZX2014}. For the Orlicz Minkowski problem, see \cite{HLYZ10,HuaH12}. For other aspects of the Orlicz Brunn-Minkowski theory, see \cite{Bor2013,CYZ2011,GLD2015,LL2011,Lud10a,LR10,WLH2012,Zhu2012,ZouX2014}

Let $\Phi_2$ be the set of convex functions $\varphi: [0,\infty)^2 \to [0,\infty)$ that are increasing in each variable and satisfy $\varphi(0,0) = 0$ and $\varphi(1,0) = \varphi(0,1)= 1$. For arbitrary $K,L \in \mathcal {K}_o ^n$, $\varphi \in \Phi_2$, the \emph{Orlicz sum} of $K$ and $L$ is defined in \cite{GHW14} by
\begin{align}\label{3}
h_{K +_\varphi L}(x) = \inf \{ \lambda > 0 :\varphi \left(\frac{h_K(x)}{\lambda},\frac{h_L(x)}{\lambda} \right) \leq 1 \}
\end{align}
for any $x \in \mathbb{R}^n$. When both $h_K(x),h_L(x) =0$, $h_{K +_\varphi L}(x)$ should be interpreted as $0$. Note that (\ref{3}) is equivalent to
\begin{align}\label{3a}
\varphi \left(\frac{h_K(x)}{h_{K +_\varphi L}(x)},\frac{h_L(x)}{h_{K +_\varphi L}(x)} \right) =1.
\end{align}
Especially, for $1\leq p < \infty$, if $\varphi (x_1,x_2) = (x_1 ^p + x_2 ^p) ^{1/p}$ for any $0 \leq x_1, x_2 \leq 1$, Orlicz addition is $L_p$ Minkowski addition. If $\varphi (x)$ is the maximum coordinate of $x \in [0,1] ^2$, i.e., $\varphi (x) = \max \{ x_1, x_2 \}$ for any  $0 \leq x_1,x_2 \leq 1$, then Orlicz addition is $L_\infty$ Minkowski addition. Orlicz addition is associative if and only if $+_\varphi = +_p$ for some $1 \leq p \leq \infty$; see \cite[Theorem 5.10]{GHW14}.

Orlicz addition is monotonic, continuous, $GL(n)$ covariant, projection covariant and has the identity property. Also the binary operator $* : (\mathcal{K}_s ^n)^2 \to \mathcal{K}^n$ is projection covariant (or equivalently, continuous and $GL(n)$ covariant) if and only if it is an Orlicz addtion for $\varphi \in \Phi_2$ \cite[Theorem 3.3, Theorem 5.2 and Corollary 5.7]{GHW14}. Here $\mathcal{K}_s ^n$ is the set of o-symmetric convex bodies, and $\mathcal{K}^n$ is the set of convex bodies.

Gardner, Hug, and Weil \cite[Section 5]{GHW14} also show that there exists a $2$-dimensional convex body $M$ independent to $K$ and $L$ such that
\begin{align}\label{4}
h_{K +_\varphi L} (\cdot) = h_M (h_K(\cdot),h_L(\cdot)).
\end{align}
If we combine the valuation property with Orlicz addition, then it is natural to assume that Orlicz addition is commutative. So if $+_\varphi$ is not $+_\infty$,
%that means
%\begin{align*}
%\{ \partial [-1,1] \cap [0,1] ^2 \} \neq \{ (x_1,x_2) \in [0,1] ^2 :\varphi(x_1,x_2) = 1 \},
%\end{align*}
then there exists a $\varphi _0 \in \Phi$, where $\Phi$ is the set of convex functions $\varphi: [0,\infty) \to [0,\infty)$ that are increasing on $[0,\infty)$ and satisfy $\varphi(0) = 0$ and $\varphi(1) = 1$, such that
%\begin{align*}
%&\{ (x_1,x_2) \in [0,1] ^2 :\varphi(x_1,x_2) = 1 \} \\
%&= \{ (x_1,x_2) \in [0,1] ^2 : \widetilde{\varphi}(x_1,x_2) = \varphi_0(x_1)+\varphi_0(x_2) = 1 \}
%\end{align*}
%That is to say
$+_\varphi = +_{\widetilde{\varphi}}$ and $\widetilde{\varphi}(x_1,x_2) = \varphi_0(x_1)+\varphi_0(x_2)$ for any $x_1,x_2 \geq 0$ (see \cite[Theorem 5.9]{GHW14}).
We will briefly write $\varphi _0$ as $\varphi$. Then we get a \emph{weaker} definition of Orlicz addition from (\ref{3a}),
\begin{align}\label{11}
\varphi \left( \frac{h_K(x)}{h_{K +_\varphi L}(x)} \right) + \varphi \left( \frac{h_L(x)}{h_{K +_\varphi L}(x)} \right) =1
\end{align}
for any $x \in \mathbb{R}^n$. Also, when both $h_K(x),h_L(x) =0$, $h_{K +_\varphi L}(x)$ should be interpreted as $0$.

In the following, Orlicz addition will be defined by (\ref{11}).

An \emph{Orlicz valuation} for a convex function $\varphi \in \Phi$ is a function $Z: \mathcal {P}_o ^n \rightarrow \langle \mathcal {K}_o ^n,+_\varphi \rangle$ such that
\begin{align}\label{16}
    Z(K \cup L) +_\varphi Z(K \cap L) = ZK +_\varphi ZL,
\end{align}
whenever $K,L,K \cup L,K \cap L \in \mathcal {P}_o ^n$. Here $\langle \mathcal {K}_o ^n,+_\varphi \rangle$ denotes $\mathcal {K}_o ^n$ endowed with Orlicz addition defined by (\ref{11}).

Before Orlicz addition was introduced, Lutwak, Yang, and Zhang (\cite{LYZ10a}, \cite{LYZ10b}) introduced the Orlicz projection bodies and the Orlicz centroid bodies (volume-normalized moment bodies) which are Orlicz analogs of the ($L_p$) projection bodies and the ($L_p$) centroid bodies, respectively. The Orlicz projection body operator is $SL(n)$ contravariant and the Orlicz centroid body operator and also the Orlicz moment body operator are $SL(n)$ covariant; see Section 2 for definitions and more details. The ($L_p$) projection body operator and the ($L_p$) moment body operator (not the ($L_p$) centroid body operator) were characterized as ($L_p$) Minkowski valuations in Theorem \ref{Lpcontra} and Theorem \ref{lpco}, respectively.  However, unlike their $L_p$ analogs, it seems that the Orlicz projection body operator $\Pi_\varphi$ and the Orlicz moment body operator $M_\varphi$ are not Orlicz valuations for any convex function $\psi \in \Phi$. So the question is whether they are Orlicz valuations, and, if not, can we modify the definitions of the Orlicz projection operator and the Orlicz moment body operator to make them be such valuations? By classifying the $SL(n)$ compatible Orlicz valuations, we show that the answers to both questions are negative.

\begin{thm}\label{thm1.3}
Let $n \geq 3$, $\varphi \in \Phi$ and $+_\varphi \neq +_p$ for any $p \geq 1$. A map $Z : \mathcal{P}_o ^n \to \langle \mathcal {K}_o ^n, +_\varphi \rangle$ is an $SL(n)$ contravariant Orlicz valuation for $\varphi$ if and only if
\begin{align*}
ZP = \{ o \}
\end{align*}
for all $P \in \mathcal{P}_o ^n$.
\end{thm}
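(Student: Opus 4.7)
The sufficiency is immediate, since $\{o\} +_\varphi \{o\} = \{o\}$ and $\psi^{-t}\{o\}=\{o\}$ for every $\psi \in SL(n)$; I focus on necessity. The plan is to (i) use $SL(n)$ contravariance to reduce the possible values of $Z$ on simplices to a small family parameterized by nonnegative scalars, (ii) apply the valuation identity \eqref{16} to a one-parameter family of simplex dissections to turn these scalars into solutions of a functional equation for $\varphi$, and (iii) invoke the hypothesis $+_\varphi\neq +_p$ to conclude that only the trivial solution is admissible.

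For step (i), \eqref{37} gives $Z\{o\}=\{o\}$. For the segment $[o,e_1]$, the $SL(n)$-stabilizer consists of matrices with first column $e_1$; the corresponding transpose-inverses are generated by shears sending $e_1\mapsto e_1+w$ for $w\in e_1^\perp$ together with the $SL(n-1)$-action on $e_1^\perp$. For $n\geq 3$ the only compact convex set invariant under this group is $\{o\}$, giving $Z[o,e_1]=\{o\}$ and, by $SL(n)$-transitivity on half-lines, $Z[o,v]=\{o\}$ for every $v\ne o$. For the higher-dimensional coordinate simplices $[o,e_{i_1},\ldots,e_{i_k}]$ with $1<k\leq n$, the analogous stabilizer analysis confines $Z$ to a subspace transverse to the simplex's linear span and symmetric under the alternating permutation action of the vertices, leaving $Z$ on each such simplex controlled by just a few nonnegative scalar parameters.

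For step (ii), dissect the standard simplex $T=[o,e_1,\ldots,e_n]$ by the hyperplane through $o$, $\lambda e_1+(1-\lambda)e_2$, $e_3,\ldots,e_n$, for $\lambda\in(0,1)$, producing two $n$-simplices $T^\pm_\lambda$ that share an $(n-1)$-face. After correcting by appropriate diagonal rescalings in $SL(n)$ (to absorb the determinantal discrepancies of the obvious maps $T\to T^\pm_\lambda$), each piece can be brought back to $T$ under an $SL(n)$ transformation, so $SL(n)$ contravariance expresses $ZT^\pm_\lambda$ and $Z(T^+_\lambda\cap T^-_\lambda)$ in terms of the parameters of step (i). Substituting into \eqref{16} and unwinding the defining relation \eqref{11} of Orlicz addition yields, at each test direction $u\in\mathbb{R}^n$, a pointwise identity involving $\varphi$ evaluated at several $\lambda$-dependent ratios of those parameters.

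The main obstacle, and the content of step (iii), is the rigidity analysis of this family of identities. Varying $\lambda\in(0,1)$ and choosing appropriate test directions recasts the identity as a Cauchy-type functional equation for $\varphi$ on a subinterval of $(0,1)$, whose only convex, increasing solutions with $\varphi(0)=0$ and $\varphi(1)=1$ are the power functions $\varphi(t)=t^p$ with $p\geq 1$. Since the hypothesis excludes $+_\varphi = +_p$, this forces all the scalar parameters from step (i) to vanish, and tracing the reduction back yields $ZP=\{o\}$ for every $P\in\mathcal{P}_o^n$. This Cauchy rigidity step is precisely what the \emph{Cauchy functional equation} keyword of the paper refers to, and it is the analytic heart of the theorem.
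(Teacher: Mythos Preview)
Your overall architecture---stabilizer reduction, simplex dissection, functional equation---matches the paper's, but two of your three steps contain genuine gaps.

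In step (i), the stabilizer analysis does \emph{not} reduce $ZT^n$ for the full-dimensional simplex $T^n=[o,e_1,\ldots,e_n]$ to finitely many scalar parameters: the $SL(n)$-stabilizer of $T^n$ is only the finite alternating group $A_n$, so contravariance merely forces $ZT^n$ to be $A_n$-invariant, an infinite-dimensional family of convex bodies. The paper does not attempt this. Instead it first proves (Lemma~\ref{lem3.1}) that $Z$ is \emph{simple}, i.e., vanishes on simplices of dimension $<n$. For $d\le n-2$ this is the stabilizer argument you describe; for $d=n-1$ it already requires a dissection of $sT^{n-1}$ and produces the identity $\varphi(\lambda)+\varphi(1-\lambda)=1$, which by convexity forces $\varphi$ linear on $[0,1]$, i.e., $+_\varphi=+_1$ (not a general power). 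Only after simplicity is established does the $n$-dimensional dissection become tractable, because the intersection term $Z(T^+_\lambda\cap T^-_\lambda)$ vanishes.

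More seriously, step (iii) misidentifies the functional equation. Once simplicity holds, evaluating the dissection identity for $sT^n$ at $\pm e_n$ yields an equation for the \emph{unknown} function $f(\lambda):=h_{\lambda^{1/n}Z(\lambda^{1/n}T^n)}(\pm e_n)$, namely
\[
f(\lambda_1+\lambda_2)=f(\lambda_1)+_\varphi f(\lambda_2),
\]
a Cauchy equation in the Orlicz arithmetic with unknown $f$, not an equation constraining $\varphi$. The paper's Lemma~\ref{Cauchy} shows that any nonnegative solution is constant; the proof is nontrivial---one argues that a nonconstant $f$ would force $+_\varphi$ to be associative, and then Pearson's classification (Theorem~\ref{thm2.1}) gives $+_\varphi=+_p$. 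Your claim that the equation ``has only power-function solutions $\varphi(t)=t^p$'' conflates the $(n{-}1)$-dimensional step (which yields linearity, $p=1$) with the $n$-dimensional step (which concerns $f$, not $\varphi$), and misses the role of Pearson's theorem entirely. Finally, you still need to pass from $h_{Z(sT^n)}(\pm e_i)=0$ to $Z(sT^n)=\{o\}$ (sublinearity of support functions) and then from simplices to all of $\mathcal{P}_o^n$ by the triangulation induction on $\mathcal{P}_i$.
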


\begin{thm}\label{thm1.4}
Let $n \geq 3$, $\varphi \in \Phi$ and $+_\varphi \neq +_p$ for any $p \geq 1$. A map $Z : \mathcal{P}_o ^n \to \langle \mathcal {K}_o ^n, +_\varphi \rangle$ is an $SL(n)$ covariant Orlicz valuation for $\varphi$ if and only if there exists a constant $a \geq 0$ such that
\begin{align*}
ZP = aP
\end{align*}
for all $P \in \mathcal{P}_o ^n$, or
\begin{align*}
ZP = -aP
\end{align*}
for all $P \in \mathcal{P}_o ^n$.
\end{thm}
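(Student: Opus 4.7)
\emph{Plan.} The strategy is to adapt the standard simplex-reduction method for classifying $SL(n)$ covariant Minkowski valuations, and then to exploit the rigidity of Orlicz addition outside the $L_p$ range---specifically the failure of $P\mapsto P+_\varphi(-P)$ to be an Orlicz valuation (flagged in the abstract)---to collapse the four-parameter $L_p$ family of Theorem~\ref{lpco} into the one-parameter family $ZP=\pm aP$.

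\emph{Converse.} For $a\ge 0$ the maps $P\mapsto aP$ and $P\mapsto -aP$ are plainly $SL(n)$ covariant. To verify (\ref{16}), recall that for $P,Q\in\mathcal{P}_o^n$ with $P\cup Q$ convex one has the pointwise multiset identity
\begin{align*}
    \{h_P(x),h_Q(x)\}=\{h_{P\cup Q}(x),h_{P\cap Q}(x)\}.
\end{align*}
Since the defining equation (\ref{11}) is symmetric in its two arguments, the value of $h_{K+_\varphi L}(x)$ depends only on the multiset $\{h_K(x),h_L(x)\}$; together with the positive homogeneity of support functions under $P\mapsto aP$ (and with $h_{-P}(x)=h_P(-x)$ for the reflected map), this yields $aP+_\varphi aQ=a(P\cup Q)+_\varphi a(P\cap Q)$, and the analogous identity with $-a$.

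\emph{Forward direction.} By (\ref{37}) we have $Z\{o\}=\{o\}$. Iterating the valuation identity (\ref{16}) on a simplicial subdivision of $P\in\mathcal{P}_o^n$ with the origin as a common vertex reduces $Z$ to its values on simplices $[o,v_1,\dots,v_n]$; by $SL(n)$ covariance these are in turn determined by $Y:=ZT^n$, where $T^n=[o,e_1,\dots,e_n]$. The subgroup $A_n\subset SL(n)$ of even permutations of $\{e_1,\dots,e_n\}$ stabilises $T^n$, so $Y$ inherits an $A_n$-symmetry that severely restricts its form. I would then dissect a stretched simplex, for instance $\tilde T=[o,e_1,\dots,e_{n-1},(r+s)e_n]$, as $T_1\cup T_2$ with $T_1,T_2$ simplices and $T_1\cap T_2$ an $(n-1)$-dimensional face; applying (\ref{16}) together with $SL(n)$ covariance allows each of $ZT_1,ZT_2,Z\tilde T,Z(T_1\cap T_2)$ to be written as an $SL(n)$-transform of $Y$ (using an induction on dimension for the lower-dimensional $Z(T_1\cap T_2)$). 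Reading off (\ref{11}) at coordinate directions produces a Cauchy-type functional equation---as signalled in the keywords---in $\varphi$ and in the parameters describing $Y$.

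\emph{Main obstacle.} The crux is to extract from this functional equation the conclusion $Y\in\{aT^n,-aT^n:a\ge 0\}$. In the $L_p$ setting the analogous equation admits the richer family of solutions of Theorem~\ref{lpco}; the hypothesis $+_\varphi\ne +_p$ is precisely what eliminates all mixed solutions. The mechanism is exactly the failure of $\alpha P+_\varphi\beta(-P)$ (with $\alpha,\beta>0$) to define an Orlicz valuation, which manifests as an incompatibility between the Cauchy equation and any such candidate for $Y$. Making this rigorous will demand (i) a careful inductive treatment of the lower-dimensional term $Z(T_1\cap T_2)$, (ii) a verification that the Cauchy equation arising is non-degenerate enough to detect the non-$L_p$ character of $\varphi$, and (iii) a connectedness argument on $\mathcal{P}_o^n$ to ensure that the sign $\pm a$ does not depend on $P$.
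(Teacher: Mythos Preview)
Your plan correctly names the two decisive ingredients the paper actually uses---a Cauchy-type functional equation (the paper isolates this as Lemma~\ref{Cauchy}) and the failure of $P\mapsto aP+_\varphi(-bP)$ to be an Orlicz valuation (Lemma~\ref{lem4.3})---and your converse argument is fine. But the forward direction, as written, has a genuine gap.

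You assert that $SL(n)$ covariance reduces $Z$ to the single body $Y=ZT^n$. This is not true: the dilates $sT^n$ for $s>0$ are \emph{not} in the $SL(n)$-orbit of $T^n$, so $Z(sT^n)$ is a priori unrelated to $Y$. The same applies to your stretched simplex $\tilde T=[o,e_1,\dots,e_{n-1},(r+s)e_n]$ and its pieces: these are $GL(n)$-images of $T^n$ with determinant $\ne 1$, so $SL(n)$ covariance alone does not let you write any of $Z\tilde T,ZT_1,ZT_2$ as an $SL(n)$-transform of $Y$. The paper's dissection is chosen precisely to get around this: one cuts $sT^n$ by the hyperplane $H_\lambda$ with normal $(1-\lambda)e_1-\lambda e_2$, and then the two halves are $\psi_1(\lambda^{1/n}sT^n)$ and $\psi_2((1-\lambda)^{1/n}sT^n)$ with $\psi_i\in SL(n)$. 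The unknown thus becomes the one-parameter family $s\mapsto h_{Z(sT^n)}(\pm e_n)$, and the resulting Cauchy equation (Lemma~\ref{Cauchy}, with the lower-dimensional term supplying the constant $a$) is exactly what forces this family to be linear in $s$---i.e.\ it \emph{proves} the homogeneity you are tacitly assuming. Only after that step does the problem collapse to a single body, and then a second induction (on the number of nonzero coordinates of $x$) pins down $h_{Z(sT^d)}(x)$ completely.

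A smaller but related issue: since $+_\varphi$ is not associative off the $L_p$ scale, ``iterating the valuation identity on a simplicial subdivision'' is not innocuous; inclusion--exclusion in $\langle\mathcal K_o^n,+_\varphi\rangle$ has no obvious meaning. The paper sidesteps this with the $\mathcal P_i$-filtration, adding one simplex at a time and using the pointwise inequality $h_{Z(P_1\cap P_2)}\le h_{ZP_i}$ (available once $Z=a\,\mathrm{id}$ is known on $\mathcal P_{i-1}$) so that each application of (\ref{16}) can be uniquely solved for $h_{Z(P_1\cup P_2)}$.
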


Unlike for the $L_p$ analogs (Theorem \ref{lpco}), we do not need to assume continuity in Theorem \ref{thm1.4}. The Orlicz difference body operator is also not an Orlicz valuation. This property plays an important role in characterizing the identity operator and the reflection operator; see Lemma \ref{lem4.3} and Lemma \ref{lem4.5}.

Note that if the condition of $SL(n)$ contravariance (or covariance) is weakened to $O(n)$ contravariance (or covariance, respectively), then there might appear more valuations. For example, the map $Z: P \mapsto B_2^n$ for all $P \in \mathcal {P}_o ^n$ is an $O(n)$ contravariant and covariant Orlicz valuation for any $\varphi \in \Phi$, where $B_2^n$ is the unit ball in $\mathbb{R}^n$.
%For $p \geq 1$, if $\varphi (t) = t^p$ for all $t \geq 0$, we call $\varphi$ a \emph{$L_p$ function}.

\section{Preliminaries and Notations}
Let $\mathbb{R}^n$ be the $n$-dimensional Euclidean space and $\{e_i\}_{i=1}^n$ be the standard basis of $\mathbb{R}^n$. The usual scalar product of two vectors $x,y \in \mathbb{R}^n$ shall be denoted by $x \cdot y$. The convex hull of a set $A \subset \mathbb{R}^n$ will be denoted by $[A]$.

A hyperplane $H$ through the origin with a normal vector $u$ is defined by $\{ x \in\mathbb{R}^n : x \cdot u = 0 \}$. Furthermore define $H^- := \{ x \in\mathbb{R}^n : x \cdot u \leq 0 \}$ and $H^+ := \{ x \in\mathbb{R}^n : x \cdot u \geq 0 \}$.

Let $T^d = [o,e_1, \ldots,e_d]$ for $1 \leq d \leq n$. Denote by $\mathcal {T}_o^n$ the set of simplices containing the origin as one of their vertices. Define $\mathcal{P}_1 := \mathcal {T}_o^n$ and $\mathcal{P}_i := \mathcal{P}_{i-1} \cup \{ P_1 \cup P_2 \in \mathcal {P}_o^n: P_1,P_2 \in \mathcal{P}_{i-1} ~\text{with~disjoint~relative~interiors}\}$ recursively. Note that for any $P \in \mathcal {P}_o^n$, there exists an $i$ such that $P \in \mathcal{P}_{i}$.

Let $H \subset \mathbb{R}^n$ be a hyperplane through the origin. For any $P \in \mathcal{P}_{i}$, $i \geq 1$, we also have
\begin{align}\label{303}
P \cap H \in \mathcal{P}_{i}.
\end{align}
Indeed, for any $T \in \mathcal {T}_o^n$, we have $T \cap H \in \mathcal {T}_o^n$. Assume that for any $P \in \mathcal{P}_{i-1}$, $i \geq 2$, we have $P \cap H \in \mathcal{P}_{i-1}$. Then for any $P = P_1 \cup P_2$, where $P_1,P_2 \in \mathcal{P}_{i-1}$ have disjoint relative interiors, we have
$$P \cap H = (P_1 \cap H) \cup (P_2 \cap H).$$
If $P_1 \cap H$ and $P_2 \cap H$ have disjoint relative interiors, then $P \cap H \in \mathcal{P}_{i}$. If $P_1 \cap H$ and $P_2 \cap H$ have joint relative interiors, then only two possibilities could happen: $(P_1 \cap H) \subset (P_2 \cap H)$ and $(P_2 \cap H) \subset (P_1 \cap H)$. For both possibilities, we have $P \cap H \in \mathcal{P}_{i-1} \subset \mathcal{P}_i$.

The \emph{support function} of a convex body $K$ is defined by
\begin{align*}
h_K(x) = \max \{ x \cdot y : y \in K\}
\end{align*}
for any $x \in \mathbb{R}^n$. It is easy to see that
\begin{align}\label{210}
h_{\lambda K} = \lambda h_K
\end{align}
for any $\lambda \geq 0$ and any convex body $K$. The support function is sublinear, i.e., it is homogeneous,
\begin{align*}
h_K(\lambda x) = \lambda h_K(x)
\end{align*}
for any $x \in \mathbb{R}^n$, $\lambda \geq 0$ and subadditive,
\begin{align*}
h_K(x+y) \leq h_K(x) + h_K(y)
\end{align*}
for any $x,y \in \mathbb{R}^n$. The support function is also continuous on $\mathbb{R}^n$ by its convexity. A convex body is uniquely determined by its support function, and for any sublinear function $h$, there exists a convex body $K$ such that $h_K = h$.

Let $\varphi \in \Phi$. The \emph{Orlicz centroid body} of $K \in \mathcal {K}_o ^n$ (actually for any star body) introduced by Lutwak, Yang, and Zhang \cite{LYZ10b} is defined by
\begin{align*}
  h_{\Gamma_ \varphi K} (x) = \inf \{ \lambda >0 : \frac{1}{|K|} \int_K \varphi(\frac{|x \cdot y|}{\lambda}) dy \leq 1\}
\end{align*}
for any $x \in \mathbb{R}^n \setminus \{ o \}$, and $h_{\Gamma_ \varphi K} (o) = 0$. Lutwak, Yang, and Zhang \cite{LYZ10b} show that the Orlicz centroid body operator is $SL(n)$ covariant, i.e.,
\begin{align*}
  \Gamma_ \varphi \psi K = \psi \Gamma_ \varphi  K
\end{align*}
for any star body $K$ and $\psi \in SL(n)$.

We can also define the \emph{Orlicz moment body} of $K \in \mathcal {K}_o ^n$ by
\begin{align}\label{301}
  h_{M_ \varphi K} (x) = \inf \{ \lambda >0 : \int_K \varphi(\frac{|x \cdot y|}{\lambda}) dy \leq 1\}
\end{align}
for any $x \in \mathbb{R}^n \setminus \{ o \}$, and $h_{M_ \varphi K} (o) = 0$. It is easy to see that the Orlicz moment body operator is also $SL(n)$ covariant. When $\varphi(t) = t^p$, $t \geq 0$, for some $p \geq 1$, it is the $L_p$ moment body which was first characterized as an $SL(n)$ covariant and $\frac{n}{p}+1$ homogeneous $L_p$ Minkowski valuation by Ludwig \cite{Lud05}. Also see Theorem \ref{lpco}, where $M_p^+ K$ and $M_p^- K$ are the asymmetric $L_p$ moment bodies of $K$ (the absolute value of $x \cdot y$ in the definition (\ref{301}) is changed to the positive and negative part, respectively).
%The \emph{asymmetric $L_p$ moment body} of a star body $K$ is defined by
%\begin{align*}
%h_{M_p^+ K} (x) = \left(\int_K (\max \{x \cdot y,0\})^p dy \right)^{1/p}
%\end{align*}
%for any $x \in \mathbb{R}^n$ and
%\begin{align*}
%h_{M_p^- K} (x) = \left(\int_K (\max \{-x \cdot y,0\})^p dy \right)^{1/p}
%\end{align*}
%for any $x \in \mathbb{R}^n$. Note that $h_{M_p^+ K}^p (x) + h_{M_p^- K}^p (x) = h_{M_p K}^p (x) =  |K| h_{\Gamma_p K}^p (x)$. Both $M_p^+,M_p^-$ are $SL(n)$ covariant $L_p$ Minkowski valuations. Positive combinations of $M_p^+$ and $M_p^-$ were first characterized as $\frac{n}{p}+1$ homogeneous and $SL(n)$ covariant $L_p$ Minkowski valuation by Ludwig \cite{Lud05}. Also see Theorem \ref{lpco}.

Let $\varphi \in \Phi$. Also introduced by Lutwak, Yang, and Zhang \cite{LYZ10a}, the \emph{Orlicz projection body} of a convex body $K$ containing the origin in its interior is defined by
\begin{align*}
  h_{\Pi_ \varphi K} (x) = \inf \{ \lambda >0 : \int_{S^{n-1}} \varphi(\frac{|x \cdot u|}{\lambda h_K(u)}) dV_K(u) \leq 1\}
\end{align*}
for any $x \in \mathbb{R}^n \setminus \{ o \}$, and $h_{\Pi_ \varphi K} (o) = 0$, where $dV_K(u) = h_K(u) dS_K(u)$ and $S_K(\cdot)$ is the surface area measure of $K$. When $\varphi(t) = t^p$, $t \geq 0$, for some $p \geq 1$, it is the $L_p$ projection body, denoted by $\Pi_p K$. When $p=1$, the ($L_1$) projection body operator $\Pi$ is defined on convex bodies (not necessarily containing the origin in their interior).
Lutwak, Yang, and Zhang \cite{LYZ10a} show that the Orlicz projection body operator is $SL(n)$ contravariant, i.e.,
\begin{align*}
  \Pi_ \varphi \psi K = \psi^{-t} \Pi_ \varphi  K
\end{align*}
for any convex body $K$ containing the origin in its interior and $\psi \in SL(n)$.

We can extend the Orlicz projection body operator to $\mathcal {P}_o ^n$ as Ludwig \cite{Lud05} did for $L_p$ cases. For $P \in \mathcal {P}_o ^n$,
\begin{align}\label{302}
h_{\hat{\Pi}_\varphi P} (x) = \inf \{ \lambda >0 : \int_{S^{n-1} \setminus \mathcal {N}_o (P)} \varphi(\frac{|x \cdot u|}{\lambda h_P(u)}) dV_P(u) \leq 1\}
\end{align}
for any $x \in \mathbb{R}^n$, where $\mathcal {N}_o (P)$ is the set of all outer unit normals of facets, which contain the origin, of $P$. Using exactly the same proof in Lutwak, Yang, and Zhang \cite{LYZ10a}, we can see that $\hat{\Pi}_\varphi$ is also $SL(n)$ contravariant. When $\varphi(t) = t^p$, $t \geq 0$, for some $p \geq 1$, this operator was first characterized as an $SL(n)$ contravariant and $\frac{n}{p}-1$ homogeneous  $L_p$ Minkowski valuation by Ludwig \cite{Lud05}. Also see Theorem \ref{Lpcontra}, where $\hat{\Pi}_p^+ P$ and $\hat{\Pi}_p^- P$ are the asymmetric $L_p$ projection bodies of $K$ (the absolute value of $x \cdot y$ in the definition (\ref{302}) is changed to the positive and negative part, respectively), and $h_{\Pi_o P} = \frac{1}{2}h_{\Pi P} - h_{\hat{\Pi}^+ P}$.

We will define Orlicz addition on $[0,\infty)$ and collect some properties of Orlicz addition.

Let $\varphi \in \Phi$. We define \emph{the Orlicz sum} $a+_\varphi b$ by
\begin{align}\label{11a}
\varphi \left(\frac{a}{a+_\varphi b}\right) + \varphi\left(\frac{b}{a+_\varphi b}\right) =1
\end{align}
for $a,b \geq 0$. If both $a,b=0$, then $a+_\varphi b$ should be interpreted as $0$. Let $a=h_K (x)$ and $b=h_L (x)$ for some convex bodies $K,L$ and $x \in \mathbb{R}^n$, we see that this definition is equal to the definition (\ref{11}). Hence we will not distinguish these two definitions. Also $h_{K +_\varphi L}(x) = h_K(x)+_\varphi h_L(x)$ for any $x \in \mathbb{R}^n$.

By (\ref{4}), we get that there exists a $2$-dimensional convex body $M$ such that
\begin{align}\label{4a}
a+_\varphi b = h_M (a,b)
\end{align}
for arbitrary $a,b \geq 0$.

Orlicz addition $+_\varphi$ is homogeneous, i.e.,
\begin{align}\label{9}
\alpha a +_\varphi \alpha b = \alpha (a +_\varphi b)
\end{align}
for arbitrary $a,b \geq 0$, $\alpha \geq 0$ and continuous, i.e.,
\begin{align}\label{10}
a_i +_\varphi b_i \to a +_\varphi b,
\end{align}
provided that $a_i \to a$, $b_i \to b$, $a_i,b_i,a,b \geq 0$. The homogeneity of Orlicz addition follows directly from the definition. The continuity of Orlicz addition is proved by Gardner, Hug, and Weil \cite[Theorem 5.2]{GHW14} for the definition (\ref{3}) and Xi, Jin and Leng \cite[Lemma 3.1, Lemma 3.2]{XJL14} for the definitions (\ref{11}) and (\ref{11a}). We give a short proof here:

\begin{proof}
Since $a_i \to a$, $b_i \to b$, there exists $N>0$ such that $a_i <a+1, b_i <b+1$ when $i >N$. Then it is easy to see that $a_i +_\varphi b_i <(a+1) +_\varphi (b+1)$ when $i >N$. Hence the sequence $\{a_i +_\varphi b_i\}$ is uniformly bounded. For any convergent subsequence $\{a_{i_j} +_\varphi b_{i_j}\}$, set $ c := \lim\limits_{j \to \infty} a_{i_j} +_\varphi b_{i_j}$. Since $\varphi$ is continuous on $[0,\infty)$, we have
\begin{align*}
\lim\limits_{j \to \infty} \varphi\left(\frac{a_{i_j}}{a_{i_j} +_\varphi b_{i_j}}\right)
+ \varphi\left(\frac{b_{i_j}}{a_{i_j} +_\varphi b_{i_j}}\right)
 =\varphi \left( \frac{a}{c} \right) + \varphi \left( \frac{b}{c} \right) =1.
\end{align*}
Hence $c=a +_\varphi b$. Since any convergent subsequence of the uniformly bounded sequence $\{a_i +_\varphi b_i\}$ converges to $a+_\varphi b$, we get that $a_i +_\varphi b_i \to a+_\varphi b$. The continuity is established.
\end{proof}

Note that there exists $0 \leq \eta <1$ such that $\varphi^{-1} (0) = [0,\eta]$. If $\eta \neq 0$, $+_\varphi$ loses some good properties such as: the equality $a +_\varphi b = a +_\varphi c$ for $a,b,c \geq 0$ does not imply $b=c$. But we still have some good properties which we list here and which are easy to check:

\begin{pro}\label{pro2.1}
Let $\varphi \in \Phi$ and let $\varphi^{-1} (0) = [0,\eta]$ where $0 \leq \eta <1$. The following propositions hold true: \\
(\rmnum{1}) If $a +_\varphi b = a +_\varphi a$ for $a,b \geq 0$, then $a=b$. \\
(\rmnum{2}) If $a +_\varphi b = a +_\varphi c$ for $a,b,c \geq 0$ satisfying $a \leq b$, then $b = c$. \\
(\rmnum{3}) If $a +_\varphi b = c +_\varphi d$ for $a,b,c,d \geq 0$ satisfying $a \leq \min \{c,d\}$, then $b \geq \max \{c,d \}$. \\
(\rmnum{4}) Let $a +_\varphi b = c +_\varphi c$ for $a,b,c \geq 0$. If $b<c$ or $a >b$, then $a>c>b$. If $b>c$ or $a < b$, then $a <c <b$.\\
(\rmnum{5}) If $a +_\varphi b \leq a +_\varphi c$ for $a,b,c \geq 0$ satisfying $\frac{b}{a} > \eta$, then $b \leq c$. \\
(\rmnum{6}) If $a +_\varphi b = c +_\varphi d$ for $a,b,c,d \geq 0$ satisfying $\max \{\frac{b}{a},\frac{c}{a} \} \leq \eta$, then $a=d$.
\end{pro}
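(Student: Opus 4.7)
The plan is to derive all six statements by direct manipulation of the defining identity $\varphi(a/s) + \varphi(b/s) = 1$ for $s := a +_\varphi b$, exploiting one structural fact about $\varphi$ that I would establish at the outset: $\varphi \in \Phi$ is \emph{strictly} increasing on $[\eta,\infty)$. This would follow from a one-line convexity argument: for $\eta \le a < b$ with $\varphi(a) = \varphi(b)$, writing $a = \tfrac{b-a}{b-\eta}\eta + \tfrac{a-\eta}{b-\eta}b$ and applying convexity yields $\varphi(b) \le \tfrac{a-\eta}{b-\eta}\varphi(b)$, forcing $\varphi(b) = 0$ and contradicting $\varphi^{-1}(0) = [0,\eta]$. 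As immediate consequences I would record that $\varphi(t) = 0$ iff $t \le \eta$ and $\varphi(t) = 1$ iff $t = 1$; these two equivalences drive every subsequent step.

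For (i) I would use homogeneity to write $s = a +_\varphi a = a(1+_\varphi 1)$, so $\varphi(a/s) = 1/2$ and hence $\varphi(b/s) = 1/2$; since $1/2 > 0$ both arguments exceed $\eta$, and strict monotonicity closes the argument. For (ii), (iii), (iv) I would follow the same template: equate the two defining identities, cancel to obtain a comparison $\varphi(x/s) \ge \varphi(y/s)$, and verify that the hypothesis on $a$ forces $x/s, y/s > \eta$ so that strict monotonicity applies. The only subtlety, which I expect to be the main (if routine) obstacle, is handling the degenerate regime where some argument falls into $[0,\eta]$ and $\varphi$ vanishes there. In (iii), for example, if $d/s \le \eta$ then $\varphi(c/s) = 1$ forces $c = s$, and $a \le d \le \eta s$ forces $\varphi(a/s) = 0$, so $b = s = c \ge d$; the case $d/s > \eta$ is direct. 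Part (iv) would additionally use the midpoint identity $\varphi(c/s) = 1/2$ to sandwich $c$ between $a$ and $b$.

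For (v) I would set $s_1 = a +_\varphi b$, $s_2 = a +_\varphi c$ with $s_1 \le s_2$. The hypothesis $b/a > \eta$ rules out $a = s_1$ (which would force $b \le \eta a$), so $\varphi(a/s_1) < 1$ and thus $\varphi(b/s_1) > 0$, placing $b/s_1$ strictly above $\eta$. Since $\varphi(a/s_2) \le \varphi(a/s_1)$, I would conclude $\varphi(c/s_2) \ge \varphi(b/s_1) > 0$ and $c/s_2 > \eta$; strict monotonicity then yields $c/s_2 \ge b/s_1$, and $s_2 \ge s_1$ gives $c \ge b$. For (vi) the key preliminary observation would be that $b \le \eta a$ implies $a +_\varphi b = a$: at $\lambda = a$ the defining sum equals $1 + 0 = 1$, and for $\lambda < a$ the first term alone exceeds $1$. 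Applying this on both sides of $a +_\varphi b = c +_\varphi d$ reduces the equation to $\varphi(c/a) + \varphi(d/a) = 1$; the hypothesis $c/a \le \eta$ makes the first term vanish, and $\varphi(d/a) = 1$ forces $d = a$. Throughout, the only genuine ``obstacle'' is careful bookkeeping to exclude the degenerate regime $t \le \eta$ before invoking strict monotonicity; the hypotheses in (i)--(vi) are tuned precisely to enable this.
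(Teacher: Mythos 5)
The paper states Proposition~\ref{pro2.1} without proof (it is introduced with ``which are easy to check''), so there is no official argument to compare against. Your proof is correct: isolating strict monotonicity of $\varphi$ on $[\eta,\infty)$, together with the consequences $\varphi(t)=0\Leftrightarrow t\le\eta$ and $\varphi(t)=1\Leftrightarrow t=1$, is exactly the structural fact that makes each of (\rmnum{1})--(\rmnum{6}) routine, and your casework on whether an argument of $\varphi$ lands in $[0,\eta]$ correctly handles the degenerate regime (e.g.\ the $d/s\le\eta$ branch in (\rmnum{3}) and the reduction $a+_\varphi b=a$ used in (\rmnum{6})).
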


We will use the following result proved by Pearson \cite{Pea66} in a paper on topological semirings on $\mathbb{R}$ which was also used by Gardner, Hug, and Weil \cite{GHW13,GHW14} to show that Orlicz addition with the associative property will be $L_p$ Minkowski addition.
\begin{thm}[Pearson \cite{Pea66}]\label{thm2.1}
Let $f:[0,\infty]^2 \to [0,\infty]$ be a continuous function satisfying the following conditions: \\
(i) $f(rs,rt)=rf(s,t)$ for any $r,s,t \geq 0$, \\
(ii) $f(f(r,s),t) = f(r,f(s,t))$ for any $r,s,t \geq 0$. \\
Then either $f(s,t)=0$, or $f(s,t)=s$, or $f(s,t)=t$, or there exists $p$, $0<p \leq \infty$, such that
\begin{align*}
f(s,t)=(s^p +t^p)^{1/p},
\end{align*}
or there exists $-\infty \leq p<0$, such that
\begin{align*}
f(s,t)=\begin{cases}
        (s^p +t^p)^{1/p},~ & \text{if}~s>0~\text{and}~t>0,\\
        0,~& \text{if}~s=0~\text{or}~t=0,
        \end{cases}
\end{align*}
where $s,t \geq 0$. When $p= \infty$, we mean $f(s,t) = \max \{ s,t \}$. When $p= -\infty$, we mean $f(s,t) = \min \{ s,t \}$.
\end{thm}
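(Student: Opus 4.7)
The plan is to reduce the classification to a one-variable problem via homogeneity, then use associativity to produce a continuous generator conjugating $f$ to addition, and finally invoke homogeneity once more to pin the generator down as a power function.

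First I would separate the degenerate solutions. By homogeneity $f(0,0) = 0$, and setting $g(t) := f(1,t)$ gives $f(s,t) = s\, g(t/s)$ for $s > 0$. Three easy branches of the theorem come from $g \equiv 0$ (which forces $f \equiv 0$ on $(0,\infty)^2$ and hence everywhere by continuity), $g \equiv 1$ (which forces $f(s,t) = s$), and $g(t) = t$ (which forces $f(s,t) = t$). Assume henceforth that none of these hold, so in particular $c := g(1) = f(1,1) > 0$ and $g$ is nonconstant.

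Next I would translate associativity into a functional equation for $g$. Setting $r = 1$ in $f(f(r,s),t) = f(r,f(s,t))$ and applying homogeneity yields
\begin{align*}
g(s)\, g\!\left(\frac{t}{g(s)}\right) = g\!\left(s\, g(t/s)\right)
\end{align*}
for $s, t > 0$. Together with continuity this forces $g$ to be strictly monotonic on the subinterval where it is neither $0$ nor $\infty$, so that $f$ restricted to an appropriate subinterval of $[0,\infty)$ carries a continuous, cancellative, associative, one-parameter operation. I would then invoke the classical representation theorem for such continuous Archimedean operations: there exists a continuous, strictly monotonic generator $\phi$ with $\phi(f(s,t)) = \phi(s) + \phi(t)$ on the relevant domain.

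Combining this conjugation with the homogeneity identity $f(\alpha s, \alpha t) = \alpha f(s,t)$ produces $\phi(\alpha s) = \mu(\alpha)\phi(s)$ for a continuous function $\mu:(0,\infty) \to (0,\infty)$ satisfying the multiplicative Cauchy equation $\mu(\alpha\beta) = \mu(\alpha)\mu(\beta)$; continuity then forces $\mu(\alpha) = \alpha^p$ for some real $p \neq 0$ and hence $\phi(s) = c_0 s^p$. Substituting back gives $f(s,t) = (s^p + t^p)^{1/p}$ on the interior, with the limiting exponents $p = \pm\infty$ delivering the $\max$ and $\min$ formulas.

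The main obstacle will be the boundary analysis. For $p > 0$ the formula extends continuously to $[0,\infty]^2$ and is compatible with the values $f(s,0) = s$ and $f(0,t) = t$ forced by continuity and homogeneity. For $p < 0$, however, the power formula blows up as a coordinate tends to $0$, and one must instead check that associativity together with homogeneity forces $f$ to vanish whenever a coordinate is $0$, producing the second branch of the statement. Handling the values at $\infty$ uniformly (to cover $p = \pm\infty$) and ruling out hybrid solutions that mix branches on different regions are the delicate points that must be pinned down before the case enumeration is complete.
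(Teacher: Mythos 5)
First, a point of context: the paper does not prove this statement at all --- it is quoted verbatim from Pearson's 1966 paper on interval semirings and used as a black box, so there is no in-paper proof to compare your argument against. Judged on its own terms, your proposal correctly identifies the standard strategy for the Archimedean branch (additive generator $\phi$ via associativity, then multiplicative Cauchy equation via homogeneity forcing $\phi(s)=c_0 s^p$), and that part of the outline is sound modulo the uniqueness-up-to-scalar of additive generators.

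The genuine gap is the sentence claiming that, once the three degenerate branches are excluded, continuity ``forces $g$ to be strictly monotonic,'' so that $f$ is cancellative and Acz\'el's representation theorem applies. This is false: $f(s,t)=\max\{s,t\}$ and $f(s,t)=\min\{s,t\}$ satisfy every hypothesis, are not among your three degenerate branches, and are not cancellative ($g(t)=\max\{1,t\}$ is constant on $[0,1]$), so no continuous strictly monotone generator conjugates them to addition. They do not arise as ``limiting exponents'' of your generator construction; they are separate idempotent solutions ($f(s,s)=s$) that must be classified by a different mechanism. More seriously, the general structure theory of continuous associative operations on an interval (Mostert--Shields/Ling) permits ordinal sums --- operations that are Archimedean on some subintervals and idempotent on others --- and the entire content of Pearson's theorem is that homogeneity kills these hybrids, leaving only the pure cases listed. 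You flag ``ruling out hybrid solutions'' as a delicate point to be pinned down later, but that, together with the idempotent cases and the $p<0$ boundary analysis, \emph{is} the theorem; deferring it leaves the proof covering only the strictly monotone branch. To close the gap you would need an explicit argument that the set $\{s: f(s,s)=s\}$ is a homogeneous (hence scale-invariant) subset of $[0,\infty]$, which forces it to be either $\{0\}$ (Archimedean case, where your generator argument then works) or all of $[0,\infty]$ (idempotent case, which must then be shown to be $\max$ or $\min$).
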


\section{The Cauchy functional equation}
If a function $f:(0,\infty) \to \mathbb{R}$ satisfies the ordinary Cauchy functional equation
\begin{align}\label{206}
  f(x+y) = f(x)+f(y)
\end{align}
for any $x,y > 0$, and $f$ is bounded from below on some non-empty open interval $I \subset \mathbb{R}$, then there exists a constant $c \in \mathbb{R}$ such that
$$f(x) = cx$$
for any $x>0$.

In this section, we will give the solution to the Cauchy type functional equation,
\begin{align*}
  f(x+y) +_\varphi a = f(x) +_\varphi f(y)
\end{align*}
for any $x,y > 0$, where $a \geq 0$ is a constant, $\varphi \in \Phi$ and $+_\varphi$ is defined by (\ref{11a}), with the additional condition $f \geq 0$.

If $+_\varphi = +_p$ for some $1 \leq p < \infty$, we set $g(x) = f(x)^p - a^p$. The function $g$ satisfies the ordinary Cauchy functional equation (\ref{206}). Hence $$f(x) = (cx + a^p)^{1/p}$$
for some constant $c \in \mathbb{R}$. Since $f \geq 0$, we have $c \geq 0$. Now we only need to show the case $+_\varphi \neq +_p$ for any $p \geq 1$.

\begin{lem}\label{Cauchy}
If a function $f:(0,\infty) \to [0,\infty)$ satisfies
\begin{align}\label{52a}
  f(x+y) +_\varphi a = f(x) +_\varphi f(y)
\end{align}
for any $x,y > 0$, where $a \geq 0$ is a constant, $\varphi \in \Phi$ and $+_\varphi \neq +_p$ for any $p \geq 1$, then
\begin{align*}
  f(z) = a
\end{align*}
for any $z>0$.
\end{lem}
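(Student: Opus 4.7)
The strategy is to argue by contradiction: assume there exists $z_0>0$ with $f(z_0)\neq a$, and show that this forces $+_\varphi$ to be an $L_p$ addition, contradicting the standing hypothesis via Theorem~\ref{thm2.1}.

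First, I would introduce the auxiliary operation $G$ defined implicitly by the relation $G(u,v)+_\varphi a = u+_\varphi v$, which Proposition~\ref{pro2.1}(\rmnum{2}) makes well-defined and one-to-one in the relevant range. The functional equation~\eqref{52a} then reads $f(x+y)=G(f(x),f(y))$, and computing $f((x+y)+z)=f(x+(y+z))$ two ways immediately yields that $G$ is associative on the image of $f$. Next, specializing~\eqref{52a} to $y=x$ gives $f(2x)+_\varphi a = f(x)+_\varphi f(x)$; Proposition~\ref{pro2.1}(\rmnum{4}) applied to this identity shows that if $f(z_0)>a$ then $f(2^k z_0)$ is strictly increasing for $k\geq 0$, while $f(z_0/2^k)$ is strictly decreasing to $a$ as $k\to\infty$; the case $f(z_0)<a$ is handled symmetrically.

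Second, I would exploit the fact that $f(\lambda\,\cdot)$ is also a solution of~\eqref{52a} for every $\lambda>0$, so the orbit analysis above applies at every scale. Combined with the three-variable identity $f(x+y)+_\varphi f(z)=f(x)+_\varphi f(y+z)$ derived from~\eqref{52a}, this produces a sufficiently rich family of Orlicz-sum identities among values of $f$. By the continuity~\eqref{10} and homogeneity~\eqref{9} of $+_\varphi$, I would pass from the associativity of $G$ on $\mathrm{Im}(f)$ to the swap identity $(u+_\varphi v)+_\varphi w=(u+_\varphi w)+_\varphi v$ on $[0,\infty)^3$, which, combined with the commutativity of $+_\varphi$, is equivalent to full associativity of $+_\varphi$.

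Once $+_\varphi$ is seen to be continuous, homogeneous, and associative, Theorem~\ref{thm2.1} forces $+_\varphi=+_p$ for some $1\leq p\leq\infty$, contradicting the hypothesis; hence $f\equiv a$. The principal obstacle is precisely the transfer step: the auxiliary operation $G$ is \emph{not} homogeneous when $a>0$, since the shift by $a$ does not rescale, so passing from the algebraic identity on the image of $f$ to the desired identity for $+_\varphi$ on all of $[0,\infty)^3$ requires a delicate limit argument exploiting the rescaling invariance of~\eqref{52a} together with the finer cancellation properties (\rmnum{5}) and (\rmnum{6}) of Proposition~\ref{pro2.1} to handle the potential plateau $\varphi^{-1}(0)=[0,\eta]$ on which $+_\varphi$ loses injectivity.
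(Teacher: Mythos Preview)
Your overall architecture---assume $f(z_0)\neq a$, force associativity of $+_\varphi$, then invoke Theorem~\ref{thm2.1}---is exactly the paper's. But the step you flag as ``the principal obstacle'' is not resolved, and it is the entire content of the lemma. Two concrete gaps:

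First, you never rule out $f(z_0)<a$. You say this case is ``handled symmetrically'', but symmetry only tells you $f(2^kz_0)$ is decreasing in $k$; you need the further observation that its limit $b$ satisfies $b+_\varphi a=b+_\varphi b$, hence $b=a$ by Proposition~\ref{pro2.1}(\rmnum{1}), contradicting $b<a$. The paper does this first and obtains $f\geq a$ globally, which is needed before your $G$ is even well-defined via Proposition~\ref{pro2.1}(\rmnum{2}).

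Second, and more seriously, the transfer from identities on $\mathrm{Im}(f)$ to identities on $[0,\infty)^3$ requires knowing that $\mathrm{Im}(f)$ is \emph{all} of $(a,\infty)$, not just some unspecified set. The paper proves this by showing (in the case $f(z_0)>a$) that $f$ is monotone increasing (via Proposition~\ref{pro2.1}(\rmnum{3})), that $\lim_{x\to 0^+}f(x)=a$ and $\lim_{x\to\infty}f(x)=\infty$, and that $f$ is continuous. With $\mathrm{Im}(f)=(a,\infty)$ in hand, the paper then avoids your non-homogeneous $G$ altogether: it writes $f(\alpha)+_\varphi f(\beta)=f(\gamma)+_\varphi f(\eta)$ whenever $\alpha+\beta=\gamma+\eta$, splits each argument in two, and obtains the \emph{bisymmetry} law $(r+_\varphi s)+_\varphi(w+_\varphi t)=(r+_\varphi w)+_\varphi(s+_\varphi t)$ directly for $+_\varphi$ on $(a,\infty)^4$. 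Homogeneity~\eqref{9} extends this to $(0,\infty)^4$, continuity~\eqref{10} to $[0,\infty)^4$, and setting $w=0$ yields associativity. This sidesteps the non-homogeneity of $G$ entirely; your proposed route through Proposition~\ref{pro2.1}(\rmnum{5}),(\rmnum{6}) is not needed here.
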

\begin{proof}
We will first prove that $f(z) <a$ is impossible for any $z>0$.

For any fixed $z>0$, assume that $f(z) < a$ (if $a =0$, we don't need to consider this case since $f(z) \geq 0$). We will show that
\begin{align}\label{cc2}
f(2^k z) < a
\end{align}
for any integer $k$, and the function $k \mapsto f(2^k z)$ decreases. It is trivial that (\ref{cc2}) holds for $k=0$. For any integer $k$, taking $x = y =2^ {k-1} z$ in (\ref{52a}), we get
\begin{align}\label{52a-1}
  f(2^k z) +_\varphi a = f(2^{k-1} z) +_\varphi f(2^{k-1} z).
\end{align}
For $k \geq 1$, assume that (\ref{cc2}) holds for $k-1$. Taking this assumption into (\ref{52a-1}), by Proposition \ref{pro2.1} (\rmnum{4}), we have
$$f(2^k z) < f(2^{k-1} z) <a.$$
Similarly, for $k \leq -1$, assume that (\ref{cc2}) holds for $k+1$. Taking this assumption into (\ref{52a-1}), by Proposition \ref{pro2.1} (\rmnum{4}), we have
$$f(2^{k+1} z) < f(2^k z) <a.$$
Thus, the desired result has been shown.

Since the function $k \mapsto f(2^k z)$ is nonnegative and decreases, the limit exists when $k \to \infty$. Denote this limit by $b$. Then $0 \leq b <a$. Taking $k \to \infty$ in (\ref{52a-1}), we have $b +_\varphi a = b +_\varphi b$. By Proposition \ref{pro2.1} (\rmnum{1}), we have $b=a$. It is a contradiction to $b <a$. So
\begin{align}\label{209}
f(z) \geq a
\end{align}
for any $z > 0$.

Next, we will show that $f(z) > a$ is also impossible for any $z>0$.

For any fixed $z>0$, assume that $f(z) > a$. Using the similar methods in the case $f(z) < a$, we get
\begin{align*}
f(2^k z) > a
\end{align*}
for any integer $k$, and the function $k \mapsto f(2^k z)$ increases. Then we obtain that
\begin{align*}
\lim\limits_{k \to \infty} f(2^k z) = \infty.
\end{align*}
Indeed, if $\lim\limits_{k \to \infty} f(2^k z)$ is a finite number, denote by $b$. It is easy to see that $b >a$. Taking $k \to \infty$ in (\ref{52a-1}), we have $b +_\varphi a = b +_\varphi b$. By Proposition \ref{pro2.1} (\rmnum{1}), we have $b=a$. It is a contradiction to $b > a$.

For any $0 < x_1 < x_2$, taking $x+y=x_2, x=x_1$ in (\ref{52a}), combining with $a \leq \min \{f(x_1),f(x_2-x_1)\}$ (the inequality (\ref{209})) and Proposition \ref{pro2.1} (\rmnum{3}), we obtain that
\begin{align*}
f(x_2) \geq \max \{f(x_1),f(x_2-x_1)\} \geq f(x_1)
\end{align*}
for any $0 < x_1 < x_2$. Hence the function $f(x)$ increase. So the limit exists when $x \to 0^+$. Taking $x, y \to 0^+$ in (\ref{52a}), by the continuity of Orlicz addition (\ref{10}) and Proposition \ref{pro2.1} (\rmnum{1}), we have
\begin{align}\label{92}
  \lim\limits_{x \to 0^+}f(x) = a.
\end{align}
Hence, for arbitrary $x_0 \geq 0$, taking $x=x_0$, $y \to 0 ^+$ in (\ref{52a}), combining with (\ref{92}), the continuity of Orlicz addition (\ref{10}) and Proposition \ref{pro2.1} (\rmnum{2}), we get
\begin{align*}
\lim\limits_{x \to x_0 ^+}f(x) = f(x_0).
\end{align*}
Similarly, taking $x+y=x_0$, $x \to x_0 ^-$ in (\ref{52a}), we get
\begin{align*}
\lim\limits_{x \to x_0 ^-}f(x) = f(x_0).
\end{align*}
These show that the function $f(x)$ is continuous for any $x >0$. Combining with (\ref{92}), we have
$$f((0,\infty)) = (a,\infty).$$

Since (\ref{52a}) holds for any $x,y >0$, we get that
\begin{align*}
  f(\alpha) +_\varphi f(\beta) = f(\alpha+\beta) +_\varphi a= f(\gamma+\eta)+_\varphi a = f(\gamma) +_\varphi f(\eta)
\end{align*}
for any $\alpha,\beta,\gamma,\eta >0$ satisfying with $\alpha+\beta=\gamma+\eta $.
Combining with $f(\alpha) +_\varphi f(\alpha) = \frac{f(\alpha)}{\varphi^{-1} (\frac{1}{2})}$ for any $\alpha >0$ and the homogeneity of Orlicz addition (\ref{9}), for any $\alpha = \alpha_1 + \alpha_2$, $\beta = \beta_1 + \beta_2$, $\alpha_1, \alpha_2, \beta_1, \beta_2 >0$, we have
\begin{align}\label{53}
  \frac{f(\alpha)}{\varphi^{-1} (\frac{1}{2})} +_\varphi \frac{f(\beta)}{\varphi^{-1} (\frac{1}{2})}
  &= (f(\alpha) +_\varphi f(\alpha)) +_\varphi (f(\beta) +_\varphi f(\beta)) \nonumber \\
  &= (f(2 \alpha_1) +_\varphi f(2 \alpha_2)) +_\varphi (f(2\beta_1) +_\varphi f(2\beta_2)),
\end{align}
and
\begin{align}\label{54}
  \frac{1}{\varphi^{-1} (\frac{1}{2})} (f(\alpha)+_\varphi f(\beta))
    &= \frac{1}{\varphi^{-1} (\frac{1}{2})} (f(\alpha_1 + \beta_1) +_\varphi f(\alpha_2 + \beta_2)) \nonumber \\
    &= (f(2\alpha_1) +_\varphi f(2\beta_1)) +_\varphi (f(2\alpha_1) +_\varphi f(2\beta_1)).
\end{align}
Since $f((0,\infty)) = (a,\infty)$, we can choose $\alpha_1,\alpha_2,\beta_1,\beta_2$ such that $f(2\alpha_1)$, $f(2\beta_1)$, $f(2\alpha_1)$, $f(2\beta_1)$ are arbitrary real numbers larger than $a$. Hence, the relations (\ref{53}), (\ref{54}) and the homogeneity and of Orlicz addition (\ref{9}) imply that
\begin{align*}
  (r +_\varphi s) +_\varphi (w +_\varphi t) = (r +_\varphi w) +_\varphi ( s +_\varphi t)
\end{align*}
for any $r,s,w,t >0$. By the continuity of Orlicz addition, we have (letting $w \to 0^+$)
\begin{align*}
  (r +_\varphi s) +_\varphi t = r +_\varphi (s +_\varphi t)
\end{align*}
for any $r,s,t \geq 0$. By (\ref{4a}), we have
\begin{align*}
h_M(h_M(r,s),t) = h_M(r,h_M(s,t)),
\end{align*}
where $M$ is a $2$-dimensional convex body independent to $r,s,t$. Now combining with Theorem \ref{thm2.1} and the convexity of $h_M$, we obtain that there exists a real $p \geq 1$ such that
\begin{align*}
h_M (s,t) = (s^p+t^p)^{1/p}
\end{align*}
for any $s,t \geq 0.$
Thus, from (\ref{11}), (\ref{11a}), (\ref{4}), (\ref{4a}) and the definition of $L_p$ Minkowski addition (\ref{205}), we conclude that $+_\varphi= +_p$ which contradict to the condition of this theorem.

Hence $f(z) =c$ for any $z >0$.
\end{proof}

\section{$SL(n)$ contravariant valuations}
We call a valuation $Z$ simple if $Z$ vanishes on lower dimensional convex bodies.
In this section, we first show that any $SL(n)$ contravariant Orlicz valuation for $\varphi \in \Phi$ is simple on $\mathcal{T}_o ^n$ when $+_\varphi$ is not Minkowski addition. Here a valuation on $\mathcal{T}_o ^n$ means that the relation (\ref{16}) holds for $K,L, K \cup L, K \cap L \in \mathcal{T}_o ^n$.

\begin{lem}\label{lem3.1}
Let $n \geq 3$. If $Z : \mathcal{T}_o ^n \to \langle \mathcal {K}_o ^n, +_\varphi \rangle$ is an $SL(n)$ contravariant Orlicz valuation for $\varphi \in \Phi$, and $+_\varphi$ is not Minkowski addition, then $Z$ is simple.
\end{lem}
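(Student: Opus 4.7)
My plan treats the cases $d \leq n-2$ and $d = n - 1$ separately, where $d$ is the dimension of the simplex $T \in \mathcal{T}_o^n$. Let $V$ denote the affine hull of $T$, a $d$-dimensional subspace through $o$. The preliminary observation, valid for every $d < n$, is that any $\psi \in SL(n)$ acting as the identity on $V$ fixes $T$ setwise, so $SL(n)$-contravariance gives $\psi^{-t} ZT = ZT$. In a basis adapted to the splitting $V \oplus V^\perp$, such $\psi$ are block matrices $\begin{pmatrix} I_d & B \\ 0 & D \end{pmatrix}$ with $\det D = 1$ and $B$ arbitrary; the dual transpose $\psi^{-t}$ preserves $V^\perp$ as a subspace (acting there by $D^{-t}$, an arbitrary element of $SL(n-d)$) while shearing $V$ into $V^\perp$ arbitrarily. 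Boundedness of $ZT$ together with the shear freedom forces $ZT \subset V^\perp$; when $d \leq n - 2$, the remaining $SL(n-d)$-invariance with $n-d \geq 2$ then immediately gives $ZT = \{o\}$, since $SL(k)$ with $k \geq 2$ acts transitively on $\mathbb{R}^k \setminus \{o\}$ and hence has no nontrivial bounded invariant set.

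The nontrivial case is $d = n - 1$. By $SL(n)$-contravariance it suffices to treat $T = [o, e_1, \ldots, e_{n-1}]$, for which the shear argument already gives $ZT \subset \mathbb{R} e_n$, so $ZT = [a e_n, b e_n]$ for some $a \leq 0 \leq b$. To pin down $a = b = 0$ I would apply the valuation identity (\ref{16}) to the decomposition of $T$ by the hyperplane $H_t = \{x_1 = t x_2\}$ through $o$, with parameter $t > 0$. The hyperplane $H_t$ contains $o, e_3, \ldots, e_{n-1}$ and meets the edge $[e_1, e_2]$ at $w_t = (t e_1 + e_2)/(1+t)$, so all four pieces $T$, $T \cap H_t^\pm$, $T \cap H_t$ lie in $\mathcal{T}_o^n$. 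The intersection $T \cap H_t$ is $(n-2)$-dimensional, hence $Z(T \cap H_t) = \{o\}$ by the first step, and the valuation identity collapses to $Z(T \cap H_t^+) +_\varphi Z(T \cap H_t^-) = ZT$.

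To evaluate the left-hand side I would build explicit $SL(n)$-maps sending $T$ onto $T \cap H_t^\pm$: for example, $\psi_t^+$ defined by $\psi_t^+ e_2 = w_t$, $\psi_t^+ e_n = (1+t) e_n$, and $\psi_t^+ e_i = e_i$ for the remaining basis vectors lies in $SL(n)$ and satisfies $\psi_t^+ T = T \cap H_t^+$; a short block-matrix calculation gives $(\psi_t^+)^{-t} e_n = e_n/(1+t)$, and analogously $(\psi_t^-)^{-t} e_n = t\, e_n/(1+t)$. Substituting into the valuation identity and reading off support functions at $\pm e_n$, the homogeneity of $+_\varphi$ reduces everything to $b\,\bigl[\tfrac{1}{1+t} +_\varphi \tfrac{t}{1+t}\bigr] = b$ (and the same for $-a$) for every $t > 0$. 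If the bracket equalled $1$ for every $t$, then setting $s = 1/(1+t) \in (0,1)$ would give $\varphi(s) + \varphi(1-s) = 1$ throughout $(0,1)$; combined with convexity of $\varphi$ and $\varphi(0) = 0, \varphi(1) = 1$, the pointwise bounds $\varphi(s) \leq s$ and $\varphi(1-s) \leq 1-s$ force equality and hence $\varphi(s) = s$ on $[0,1]$, i.e., $+_\varphi = +_1$, contradicting the hypothesis. Hence $a = b = 0$ and $ZT = \{o\}$.

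The main obstacle is this codimension-$1$ case: pure $SL(n)$-invariance leaves a one-parameter family of candidate values for $ZT$. The key idea is to find an $SL(n)$-compatible subdivision of $T$ whose two halves are $SL(n)$-equivalent to $T$ via maps whose dual transposes scale the normal direction $\mathbb{R} e_n$ by exactly $1/(1+t)$ and $t/(1+t)$; these weights are precisely what converts the failure of $\varphi(s) + \varphi(1-s) \equiv 1$ on $(0,1)$ (which is what $+_\varphi \neq +_1$ means) into the conclusion $ZT = \{o\}$.
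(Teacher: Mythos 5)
Your argument is correct and is essentially the paper's own proof: the same shear/invariance argument disposes of $\dim T\le n-2$, and for $\dim T=n-1$ you use the same dissection of $T^{n-1}$ by a hyperplane through $o$ and the face $[o,e_3,\dots,e_{n-1}]$, with the same $SL(n)$ maps (your $\psi_t^{\pm}$ are the paper's $\psi_1,\psi_2$ under the reparametrization $\lambda=t/(1+t)$), arriving at $\varphi(s)+\varphi(1-s)=1$ on $(0,1)$ and the same convexity contradiction with $+_\varphi\neq+_1$. The only cosmetic difference is that in low dimensions you argue via boundedness of the invariant body rather than via constancy of $h_{ZT}$ on a dense set.
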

\begin{proof}
Let $T \in \mathcal{T}_o ^n$ and $\dim T =d <n$. By the $SL(n)$ contravariance of $Z$, we can assume (w.l.o.g.) that the linear space of $T$ is $ \text{span} \{ e_1, \ldots, e_d \}$, the linear space spanned by $ \{ e_1, \ldots, e_d \}$. Let $\psi := \left[ {\begin{array}{*{20}{c}}
I&A \\
0&B
\end{array}} \right] \in SL(n)$, where $I \in \mathbb{R}^{d \times d}$ is the identity matrix, $A \in \mathbb{R}^{d \times (n-d)}$ is an arbitrary matrix, $B \in \mathbb{R}^{(n-d) \times (n-d)}$ is a matrix with $\text{det} B = 1$, $0 \in \mathbb{R}^{(n-d)\times d}$ is the zero matrix. Also, let $x = \left( {\begin{array}{*{20}{c}}
{x'}\\
{x''}
\end{array}} \right) \in \mathbb{R}^{d \times (n-d)}$ and $x'' \neq 0$. Then $\psi T = T$. Combining with the $SL(n)$ contravariance of $Z$, we have
\begin{align*}
h_{ZT} (x) = h_{Z \psi T} (x) = h_{ZT}(\psi ^{-1} x) = h_{ZT} \left( {\begin{array}{*{20}{c}}
{x' - AB^{-1}x''}\\
{B^{-1}x''}
\end{array}} \right).
\end{align*}

For $d \leq n-2$, we can choose an appropriate matrix $B$ such that $B^{-1}x''$ is any nonzero vector in $\text{Span} \{ e_{d+1},\ldots,e_n \}$. After fixing $B$ we can also choose an appropriate matrix $A$ such that $x' - AB^{-1}x''$ is any vector in $\text{Span} \{ e_1, \ldots, e_d \}$. So $h_{ZT} (\cdot)$ is constant on a dense set of $\mathbb{R}^n$. By the continuity of the support function, we get $h_{ZT} = 0$.

In the case $d = n-1$ we have $B=1$. Then we can choose $A$ such that $x' - AB^{-1}x''=0$ and $h_{ZT} (x) = h_{ZT} (x_n e_n)$, where $x_n$ is the $n$-th coordinate of $x$. Next we want to show that $h_{Z(sT^{n-1})} (e_n)= 0$ for any $s > 0$.

For $0 < \lambda < 1$, we denote by $H_\lambda$ the hyperplane through the origin with a normal vector $(1-\lambda) e_1- \lambda e_2$. Since $Z$ is an Orlicz valuation,
\begin{align*}
h_{Z(sT^{n-1})} (e_n) +_\varphi h_{Z(sT^{n-1} \cap H_\lambda)} (e_n) = h_{Z(sT^{n-1}\cap H_\lambda ^-)} (e_n) +_\varphi h_{Z(sT^{n-1} \cap H_\lambda ^+)} (e_n).
\end{align*}
From the conclusion above for $d=n-2$, we get
\begin{align*}
h_{Z(sT^{n-1})} (e_n) = h_{Z(sT^{n-1}\cap H_\lambda ^-)} (e_n) +_\varphi h_{Z(sT^{n-1} \cap H_\lambda ^+)} (e_n).
\end{align*}
Define $\psi _1 \in SL(n)$ by
$$\psi _1 e_1 = \lambda e_1 + (1-\lambda) e_2,~\psi _1 e_2 = e_2,~\psi _1 e_n = \frac{1}{\lambda} e_n,~\psi _1 e_i = e_i,~\text{for}~3 \leq i \leq n-1.$$
Also define $\psi _2 \in SL(n)$ by
$$\psi _2 e_1 = e_1,~\psi _2 e_2 = \lambda e_1 + (1-\lambda) e_2,~\psi _2 e_n = \frac{1}{1-\lambda} e_n,~\psi _2 e_i = e_i,~\text{for}~3 \leq i \leq n-1.$$
So $sT^{n-1}\cap H_\lambda ^- = \psi _1 sT^{n-1}$, $sT^{n-1}\cap H_\lambda ^+ = \psi _2 sT^{n-1}$. By the $SL(n)$ contravariance of $Z$, we obtain
\begin{align*}
h_{Z(sT^{n-1})} (e_n) &= h_{Z(\psi _1 sT^{n-1})} (e_n) +_\varphi h_{Z(\psi _2 sT^{n-1})} (e_n), \\
&= h_{Z( sT^{n-1})} (\psi _1 ^{-1} e_n) +_\varphi h_{Z (sT^{n-1})} (\psi _2 ^{-1} e_n), \\
&= h_{Z( sT^{n-1})} (\lambda e_n) +_\varphi h_{Z (sT^{n-1})} ((1-\lambda) e_n).
\end{align*}

If $h_{Z(sT^{n-1})} (e_n) \neq 0$, by the homogeneity of the support function, the definition of Orlicz addition (\ref{11a}) and the continuity of $\varphi$ on $[0,\infty]$, we have
\begin{align}\label{201}
\varphi (\lambda) + \varphi (1-\lambda) = 1
\end{align}
for arbitrary $0 \leq \lambda \leq 1$.
Since $\varphi \in \Phi$,
\begin{align}\label{202}
\varphi (\lambda) \leq (1-\lambda) \varphi(0) + \lambda \varphi(1) = \lambda
\end{align}
for any $0 \leq \lambda \leq 1$. Combining (\ref{201}) with (\ref{202}), we get that $\varphi$ is linear on $[0,1]$. By (\ref{205}), (\ref{11}) and (\ref{11a}), we get that $+_\varphi$ is Minkowski addition, a contradiction.
Hence, $h_{Z(sT^{n-1})} (e_n)= 0$ for any $s > 0$.

Combining with the homogeneity of the support function, we get that $h_{Z(sT^{n-1})} (x) = h_{Z(sT^{n-1})} (x_ne_n)=0$. Since $Z$ is $SL(n)$ contravariant, we get that $h_{ZT} =0$ for $\dim T \leq n-1$.
\end{proof}

Now we use the Cauchy functional equation (\ref{52a}) to give the main results in the contravariant case. Since $ZP = \{ o \}$ for all $P \in \mathcal{P}_o ^n$ is an $SL(n)$ contravariant Orlicz valuation for any $\varphi \in \Phi$, we only need to prove the necessary condition of Theorem \ref{thm1.3}.
\begin{thm}\label{thm3.2}
Let $n \geq 3$ , $\varphi \in \Phi$ and $+_\varphi \neq +_p$ for any $p \geq 1$. If $Z : \mathcal{P}_o ^n \to \langle \mathcal {K}_o ^n, +_\varphi \rangle$ is an $SL(n)$ contravariant Orlicz valuation for $\varphi$, then
\begin{align}\label{h1}
ZP = \{ o \}
\end{align}
for all $P \in \mathcal{P}_o ^n$.
\end{thm}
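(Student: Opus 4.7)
My plan is to reduce the statement to showing $Z(S) = \{o\}$ for every $S \in \mathcal{T}_o^n$, and then propagate to all of $\mathcal{P}_o^n$ by a standard induction on the filtration $\mathcal{P}_i$ using the valuation identity together with Proposition~\ref{pro2.1}(i) (which makes $Z(P) +_\varphi \{o\} = \{o\} +_\varphi \{o\}$ force $Z(P) = \{o\}$). Lemma~\ref{lem3.1} already handles lower-dimensional simplices, so I only need to establish $Z(S) = \{o\}$ for a full-dimensional $S \in \mathcal{T}_o^n$. I do this first for the one-parameter family $T_a := [o, e_1, \ldots, e_{n-1}, a e_n]$, $a > 0$, by turning a dissection into a Cauchy-type functional equation; $SL(n)$ contravariance plus the transitivity of the alternating group will then take care of an arbitrary full-dimensional simplex.

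Dissect $T_a$ by the hyperplane $H_\lambda$ with normal $(1-\lambda) e_1 - \lambda e_2$ through the origin, as in Lemma~\ref{lem3.1}. A direct vertex-by-vertex check shows $T_a \cap H_\lambda^- = \psi_\lambda T_{\lambda a}$ and $T_a \cap H_\lambda^+ = \psi'_\lambda T_{(1-\lambda) a}$, where $\psi_\lambda \in SL(n)$ acts by $e_1 \mapsto \lambda e_1 + (1-\lambda) e_2$, $e_n \mapsto e_n/\lambda$ and fixes the other basis vectors, while $\psi'_\lambda$ is the analogue with the roles of $e_1, e_2$ swapped and $\lambda$ replaced by $1-\lambda$. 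Since $T_a \cap H_\lambda$ is $(n-1)$-dimensional, simplicity collapses the valuation identity to $Z(T_a) = \psi_\lambda^{-t} Z(T_{\lambda a}) +_\varphi (\psi'_\lambda)^{-t} Z(T_{(1-\lambda) a})$. Using $\psi_\lambda^{-1} e_n = \lambda e_n$ and $(\psi'_\lambda)^{-1} e_n = (1-\lambda) e_n$, the nonnegative function $g(a) := h_{Z(T_a)}(e_n)$ therefore satisfies
\[g(a) = \lambda\, g(\lambda a) +_\varphi (1-\lambda)\, g((1-\lambda) a).\]
Setting $b := \lambda a$, $c := (1-\lambda) a$ and invoking the homogeneity of $+_\varphi$, the nonnegative function $f(a) := a\, g(a)$ obeys $f(b+c) = f(b) +_\varphi f(c)$ for all $b, c > 0$; Lemma~\ref{Cauchy} (with constant $0$) then forces $f \equiv 0$, hence $g \equiv 0$. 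The same computation with $e_n$ replaced by $-e_n$ gives $h_{Z(T_a)}(\pm e_n) = 0$ for every $a > 0$.

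For $n \geq 3$ the alternating group $A_n$ acts transitively on $\{1, \ldots, n\}$, so for each $i$ there is an even permutation matrix $\sigma \in SL(n)$ with $\sigma T_a = T_a^{(i)} := [o, e_1, \ldots, a e_i, \ldots, e_n]$; contravariance then upgrades the above to $h_{Z(T_a^{(i)})}(\pm e_i) = 0$ for every $a > 0$ and every $i$. Given a full-dimensional $S = [o, w_1, \ldots, w_n] \in \mathcal{T}_o^n$ with $V := \det(w_1, \ldots, w_n) > 0$ (reorder the $w_j$ if necessary) and any index $i$, the map $\psi \in SL(n)$ defined by $\psi(e_j) = w_j$ for $j \neq i$ and $\psi(e_i) = w_i/V$ has $\det\psi = 1$ and satisfies $\psi T_V^{(i)} = S$, so contravariance yields
\[h_{Z(S)}(\pm w_i) = h_{Z(T_V^{(i)})}(\psi^{-1}(\pm w_i)) = V\, h_{Z(T_V^{(i)})}(\pm e_i) = 0\]
for every $i$. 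Since $o \in Z(S)$ and $w_1, \ldots, w_n$ span $\mathbb{R}^n$, vanishing of $h_{Z(S)}$ at $\pm w_i$ for every $i$ forces $Z(S) \subseteq \bigcap_{i=1}^n \{y : w_i \cdot y = 0\} = \{o\}$. The hard part of this plan is engineering a dissection in which one coordinate direction survives essentially untouched by both $\psi_\lambda$ and $\psi'_\lambda$: the unimodularity of $\psi_\lambda$ forces exactly the scaling $e_n \mapsto e_n/\lambda$ needed to convert the valuation identity into a bona fide one-variable Cauchy equation, and the transitivity of $A_n$ (which requires $n \geq 3$) is the only mechanism available to promote vanishing at $\pm e_n$ to vanishing at every $\pm e_i$.
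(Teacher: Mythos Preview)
Your proof is correct and follows essentially the same route as the paper's. The only cosmetic difference is the choice of one-parameter family: the paper dissects the uniformly scaled simplex $sT^n$ and uses the maps $\psi_1,\psi_2$ that rescale all coordinates by $\lambda^{\pm 1/n}$, arriving at the Cauchy equation for $f(\lambda)=h_{\lambda^{1/n}Z(\lambda^{1/n}T^n)}(\pm e_n)$, whereas you dissect $T_a=[o,e_1,\dots,e_{n-1},ae_n]$ and put all the unimodular correction into the $e_n$-slot, obtaining the Cauchy equation for $f(a)=a\,h_{Z(T_a)}(\pm e_n)$; since $T_{s^n}$ is an $SL(n)$-image of $sT^n$, these are the same computation in different coordinates. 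Your subsequent passage to all $\pm e_i$ via $A_n$-transitivity and then to an arbitrary full-dimensional $S$ via the maps $\psi\colon T_V^{(i)}\to S$ is a slightly more explicit version of what the paper does in one line (``by the $SL(n)$ contravariance of $Z$''), and the inductive extension over the filtration $\mathcal P_i$ matches the paper's final paragraph.
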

\begin{proof}
For $0<\lambda <1$, let $H_\lambda$ denote the hyperplane through the origin with a normal vector $(1-\lambda) e_1- \lambda e_2$. Since $Z$ is a valuation, for any $x \in \mathbb{R}^n$, $s>0$, we have
\begin{align*}
h_{Z(sT^{n})} (x) +_\varphi h_{Z(sT^{n} \cap H_\lambda)} (x) = h_{Z(sT^{n}\cap H_\lambda ^-)} (x) +_\varphi h_{Z(sT^{n} \cap H_\lambda ^+)} (x).
\end{align*}
By Lemma \ref{lem3.1}, $h_{Z(sT^{n} \cap H_\lambda)} (x) = 0$. Thus,
\begin{align*}
h_{Z(sT^{n})} (x)= h_{Z(sT^{n}\cap H_\lambda ^-)} (x) +_\varphi h_{Z(sT^{n} \cap H_\lambda ^+)} (x).
\end{align*}
Define $\psi_1 \in SL(n)$ by
$$\psi_1 e_1 = (\frac{1}{\lambda})^{1/n} (\lambda e_1 + (1-\lambda) e_2),~\psi_1 e_2 = (\frac{1}{\lambda})^{1/n} e_2,~\psi_1 e_i = (\frac{1}{\lambda})^{1/n} e_i,~\text{for}~3 \leq i \leq n.$$
Also define $\psi_2 \in SL(n)$ by
$$\psi_2 e_1 = (\frac{1}{1-\lambda})^{1/n} e_1,~\psi_2 e_2 = (\frac{1}{1-\lambda})^{1/n} (\lambda e_1 + (1-\lambda) e_2),$$
$$\psi_2 e_i = (\frac{1}{1-\lambda})^{1/n} e_i,~\text{for}~3 \leq i \leq n.$$
So $sT^{n}\cap H_\lambda ^- = \psi_1 \lambda^{1/n} sT^{n}$, $sT^{n}\cap H_\lambda ^+ = \psi_2 (1-\lambda)^{1/n}sT^{n}$. By the $SL(n)$ contravariance of $Z$, we get
\begin{align}\label{7}
h_{Z(sT^{n})} (x)
=h_{Z(\lambda^{1/n} sT^{n})} (\psi_1  ^{-1} x) +_\varphi h_{Z((1-\lambda)^{1/n}sT^{n})} (\psi_2^{-1}x),
\end{align}
where $x=(x_1, \cdots, x_n)^t$, $\psi_1 ^{-1} x = \lambda^{1/n} (\frac{1}{\lambda} x_1, \frac{\lambda -1 }{\lambda}x_1 +x_2, x_3,\cdots,x_n)^t$, $\psi_2 ^{-1} x = (1-\lambda)^{1/n}(x_1 - \frac{\lambda}{1-\lambda} x_2, \frac{1}{1-\lambda}x_2,x_3,\cdots,x_n)^t$.
If we choose $x =\pm e_n$ in (\ref{7}), then
\begin{align}\label{8}
h_{Z(sT^{n})} (\pm e_n)
=h_{\lambda ^{1/n} Z(\lambda ^{1/n} sT^{n})} (\pm e_n) +_\varphi h_{(1- \lambda)^{1/n}Z((1-\lambda)^{1/n}sT^{n})} (\pm e_n)
\end{align}
for $0 < \lambda <1$, and $s>0$.
Taking $\lambda = \frac{\lambda_1}{\lambda_2}$, $0 < \lambda_1 < \lambda_2$ and $s = \lambda_2 ^{1/n}$ in (\ref{8}), with (\ref{210}) and the homogeneity of Orlicz addition (\ref{9}), we get
\begin{align}\label{1}
h_{\lambda_2^{1/n}Z(\lambda_2^{1/n}T^{n})} (\pm e_n)
= h_{\lambda_1^{1/n}Z(\lambda_1^{1/n}T^{n})} (\pm e_n) +_\varphi h_{(\lambda_2-\lambda_1)^{1/n}Z((\lambda_2-\lambda_1)^{1/n}T^{n})} (\pm e_n)
\end{align}
for arbitrary $0 < \lambda_1 < \lambda_2$, $s>0$.

Define $f(\lambda) := h_{\lambda^{1/n}Z(\lambda^{1/n}T^{n})} (\pm e_n)$ for $\lambda >0$. (\ref{1}) shows that $f$ satisfies the Cauchy functional equation (\ref{52a}) with $a=0$. Hence, Lemma \ref{Cauchy} shows that $h_{\lambda^{1/n}Z(\lambda^{1/n}T^{n})} (\pm e_n) = 0$ for any $\lambda >0$. That means $h_{Z(sT^{n})} (\pm e_n) = 0$ for any $s>0$. By the $SL(n)$ contravariance of $Z$, we get that $h_{Z(sT^{n})} (\pm e_i) = 0$ for $1\leq i \leq n$. Since the support function is sublinear, we get that
$$h_{Z(sT^{n})} (x) = 0$$
for any $x \in \mathbb{R}^n$, $\lambda >0$. Hence $Z(sT^n)= \{ o \}$ for any $s>0$.

By the $SL(n)$ contravariance of $Z$ and Lemma \ref{lem3.1}, (\ref{h1}) holds true for any simplex in $\mathcal {T}_o^n = \mathcal{P}_{1}$.
Assume that (\ref{h1}) holds on $\mathcal{P}_{i-1}$, $i \geq 2$. For $P=P_1 \cup P_2 \in \mathcal{P}_{i}$, where $P_1,P_2 \in \mathcal{P}_{i-1}$ have disjoint relative interiors, by (\ref{303}), we have $P_1 \cap P_2 \in \mathcal{P}_{i-1}$. Hence we have
$$h_{Z(P_1 \cap P_2)} =0.$$
Therefore $h_{Z(P_1 \cup P_2)}$ is uniquely determined by (\ref{16}) and (\ref{11a}), namely,
\begin{align*}
h_{Z(P_1 \cup P_2)}= h_{ZP_1} +_\varphi h_{ZP_2} =0.
\end{align*}
Hence, we conclude that (\ref{h1}) holds on $\mathcal{P}_i$ inductively for any $i$. For any $P \in \mathcal {P}_o^n$, there exists an $i$ such that $P \in \mathcal{P}_{i}$. Thus (\ref{h1}) holds for all $P \in \mathcal {P}_o^n$.
\end{proof}

%%%%%%%%%%%%%%%%%%%%%%%%%%%%%%%%%%%%%%%%%%%%%%%%%%%%%%%%%%%%%%%%%%%%%%%%%%%%%%%%%%%%%%%%%%
\section{$SL(n)$ covariant valuations}
If $K \cup L$ is convex, then
$$h_{K \cup L} = \max \{ h_K,h_L \}~ \text{and} ~h_{K \cap L} = \min \{ h_K,h_L \}.$$
Hence, it is easy to see that the identity operator and the reflection operator are $SL(n)$ covariant Orlicz valuations for any $\varphi \in \Phi$.
(For general $M$-addition, Mesikepp \cite{Mesikepp2013} showed that if $M \subset (-\infty,0]^2 \cup [0,\infty)^2$ is symmetric in the line $y=x$, then the identity operator is an valuation with respect to $M$-addition.)
As in the contravariant case, we only need to prove the necessary condition of Theorem \ref{thm1.4}.

The following Lemma can be found in Ludwig \cite{Lud05}, Haberl \cite{Hab12b} and Parapatits \cite{Par14b}. For completeness, we give a proof here.

\begin{lem}\label{lem4.1}
Let $n \geq 2$. If a map $Z : \mathcal{P}_o ^n \to \mathcal{K}_o ^n$ is $SL(n)$ covariant, then $ZP \subset \text{lin}~ P$, and
\begin{align*}
  h_{ZP} (x) = h_{ZP} (\pi_P x), ~x \in \mathbb{R}^n
\end{align*}
for any $P \in \mathcal{P}_o ^n$, where $\pi_P x$ is the orthogonal projection of $x$ onto linear hull of $P$.
\end{lem}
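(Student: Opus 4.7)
My plan is to isolate the subgroup of $SL(n)$ that stabilizes $P$ pointwise on $\text{lin } P$ but acts freely on directions transverse to it, and then to exploit covariance together with compactness of $ZP$. The statement is trivial when $\dim P = n$ (where $\text{lin } P = \mathbb{R}^n$ and $\pi_P$ is the identity) and when $\dim P = 0$ (where $P = \{o\}$ and $ZP = \{o\}$ by (\ref{37})). So I set $d := \dim P$ with $1 \leq d \leq n-1$. By the $SL(n)$ covariance of $Z$ I may apply a suitable $\psi_0 \in SL(n)$ sending $\text{lin } P$ onto $\text{span}\{e_1, \ldots, e_d\}$; if the two conclusions hold for $\psi_0 P$, then $ZP = \psi_0^{-1} Z(\psi_0 P) \subset \psi_0^{-1} \text{lin}(\psi_0 P) = \text{lin } P$, and the support-function identity transfers in the same way. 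So I may assume $\text{lin } P = \text{span}\{e_1, \ldots, e_d\}$.

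The core step is that for any $A \in \mathbb{R}^{d \times (n-d)}$ and any $B \in \mathbb{R}^{(n-d) \times (n-d)}$ with $\det B = 1$ (where $B = 1$ is forced if $n - d = 1$), the block matrix
$$\psi = \begin{pmatrix} I_d & A \\ 0 & B \end{pmatrix}$$
lies in $SL(n)$ and fixes every vector of $\text{span}\{e_1, \ldots, e_d\}$; in particular $\psi P = P$. By $SL(n)$ covariance, $\psi ZP = Z\psi P = ZP$. Writing $y \in ZP$ as $y = (y', y'') \in \mathbb{R}^d \times \mathbb{R}^{n-d}$, one has $\psi y = (y' + Ay'', By'') \in ZP$. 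If $y'' \neq 0$, pick an index $j$ with $y''_j \neq 0$ and let the $j$-th column of $A$ run along any fixed direction of $\mathbb{R}^d$ with norm tending to infinity; then $y' + Ay''$ becomes unbounded, forcing $ZP$ to contain an unbounded set. This contradicts the compactness of $ZP \in \mathcal{K}_o^n$. Hence $y'' = 0$ for every $y \in ZP$, which is exactly $ZP \subset \text{lin } P$.

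The second conclusion is then immediate: since $ZP \subset \text{lin } P$ and $x - \pi_P x$ is orthogonal to $\text{lin } P$, one has $y \cdot x = y \cdot \pi_P x$ for every $y \in ZP$, so taking the supremum over $y$ gives $h_{ZP}(x) = h_{ZP}(\pi_P x)$. The only genuinely delicate point in this plan is the unboundedness argument: it must still go through when $n - d = 1$ (so that $B$ offers no freedom at all), and it does, because the contradiction only uses the unrestricted choice of $A$, not of $B$.
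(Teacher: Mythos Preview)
Your proof is correct and uses the same key device as the paper: the block upper-triangular matrices $\psi = \begin{pmatrix} I_d & A \\ 0 & B \end{pmatrix}$ that fix $P$ pointwise. The only difference is in execution: the paper works on the support-function side, using $h_{ZP}(x) = h_{ZP}(\psi^t x)$ and choosing $A$ so that $A^t x' + x'' = 0$ to obtain $h_{ZP}(x) = h_{ZP}(\pi_P x)$ directly (then $ZP \subset \text{lin } P$ follows); you work on the primal side, using $\psi ZP = ZP$ together with compactness to get $ZP \subset \text{lin } P$ first (then the support-function identity follows). These are dual versions of the same argument.
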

\begin{proof}
Let $P \in \mathcal{P}_o ^n$. Since $Z$ is $SL(n)$ covariant, we can assume (w.l.o.g.) that the linear space of $P$ is $ \text{span} \{ e_1, \cdots, e_d \}$, the linear space spanned by $ \{ e_1, \cdots, e_d \}$. If $d=n$, the statement is trivial. Now let $d <n$. Denote $\psi := \left[ {\begin{array}{*{20}{c}}
I_d &A \\
0& I_{n-d}
\end{array}} \right] \in SL(n)$, where $I_d \in \mathbb{R}^{d \times d}, I_{n-d} \in \mathbb{R}^{(n-d) \times (n-d)}$ are the identity matrixes, $A \in \mathbb{R}^{d \times (n-d)}$ is an arbitrary \text{matrix}, $0 \in \mathbb{R}^{(n-d)\times d}$ is the zero matrix. Also, let $x = \left( {\begin{array}{*{20}{c}}
{x'}\\
{x''}
\end{array}} \right) \in \mathbb{R}^{d \times (n-d)}$, and $x' \neq 0$. Then $\psi P = P$. Combining with the $SL(n)$ covariance of $Z$, we have
\begin{align*}
h_{ZP} (x) = h_{Z \psi P} (x) = h_{ZP}(\psi ^t x) = h_{ZP} \left( {\begin{array}{*{20}{c}}
{x'}\\
{A^t x' +  x''}
\end{array}} \right).
\end{align*}
We can choose an appropriate matrix $A$ such that $A^t x' + x''=0$. Hence $h_{ZP} (x) = h_{ZP} (x')$ when $x' \neq 0$. With the continuity of the support function, we obtain the desired result.
\end{proof}

Although the identity operator and the reflection operator are \text{Orlicz} valuations, unlike in the $L_p$ cases, we will show that the Orlicz difference body operator is not an Orlicz valuation for $\varphi \in \Phi$ when $+_\varphi \neq +_p$.

\begin{lem}\label{lem4.3}
For any $a,b >0$, $\varphi \in \Phi$, if $+_\varphi \neq +_p$ for any $p \geq 1$, then the map $Z : \mathcal{P}_o ^n \to \langle \mathcal {K}_o ^n, +_\varphi \rangle$ defined by $ZP = aP +_\varphi (-bP)$ is not an Orlicz valuation for $\varphi$.
\end{lem}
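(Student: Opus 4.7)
The plan is a proof by contradiction: I would assume $Z$ defined by $ZP = aP +_\varphi (-bP)$ is an Orlicz valuation, derive from the valuation identity that $+_\varphi$ is associative on $[0,\infty)^2$, and then invoke Pearson's Theorem \ref{thm2.1} to force $+_\varphi = +_p$ for some $1 \le p < \infty$, contradicting the hypothesis.

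To produce the associativity relation, I would work inside a coordinate plane of $\mathbb{R}^n$ and, for parameters $\alpha, \beta, c, t > 0$ with $\alpha t > c$, consider the triangles
\[
K = [o, c e_1, \alpha e_2], \qquad L = [o, c e_1, -\beta e_2].
\]
A short check gives $K, L \in \mathcal{P}_o^n$, $K \cap L = [o, c e_1]$, and $K \cup L = [c e_1, \alpha e_2, -\beta e_2]$ (a triangle containing $o$ on the edge $[\alpha e_2, -\beta e_2]$), so $K \cup L \in \mathcal{P}_o^n$. Evaluating at $x = e_1 + t e_2$ and $-x$, a direct computation gives
\[
h_K(x) = h_{K \cup L}(x) = \alpha t, \quad h_L(x) = h_{K \cap L}(x) = c, \quad h_L(-x) = h_{K \cup L}(-x) = \beta t,
\]
while $h_K(-x) = h_{K \cap L}(-x) = 0$. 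Substituting into the valuation identity $h_{Z(K \cup L)}(x) +_\varphi h_{Z(K \cap L)}(x) = h_{ZK}(x) +_\varphi h_{ZL}(x)$ via $h_{ZP}(y) = a h_P(y) +_\varphi b h_P(-y)$ and $r +_\varphi 0 = r$, the equation collapses to
\[
(a\alpha t +_\varphi b\beta t) +_\varphi ac \;=\; a\alpha t +_\varphi (ac +_\varphi b\beta t).
\]
Setting $A = a\alpha t$, $B = b\beta t$, $C = ac$ and using the commutativity of $+_\varphi$ visible in (\ref{11a}), this is exactly $(A +_\varphi B) +_\varphi C = A +_\varphi (B +_\varphi C)$.

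As $(\alpha, \beta, c, t)$ ranges over positive reals with $\alpha t > c$, the triple $(A, B, C)$ sweeps out $\{A > C > 0, \; B > 0\}$. Swapping the roles of $A$ and $C$ by commutativity extends the identity to $0 < A \le C$, and the continuity (\ref{10}) handles the boundary where any of $A, B, C$ vanishes, giving full associativity of $+_\varphi$ on $[0,\infty)^2$. Writing $a +_\varphi b = h_M(a, b)$ via (\ref{4a}) for a $2$-dimensional convex body $M$, Theorem \ref{thm2.1} then forces $h_M(s, t) = (s^p + t^p)^{1/p}$ for some $p \in (0, \infty]$; the degenerate alternatives $h_M \equiv 0$ and $h_M(s, t) \in \{s, t\}$ are eliminated by $1 +_\varphi 1 > 1$, and the Pearson case with $p < 0$ is eliminated by $1 +_\varphi 0 = 1$. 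Sublinearity of $h_M$ forces $p \ge 1$, and the passage from (\ref{3}) to the symmetric form (\ref{11}) already excludes $p = \infty$ for $\varphi \in \Phi$. Hence $+_\varphi = +_p$ for some $1 \le p < \infty$, the desired contradiction. The main technical obstacle is selecting the pair $K, L$ and probe direction $x$ so that the support-function bookkeeping collapses the valuation equation directly into associativity (rather than some weaker bisymmetry relation that would not close the argument); once this configuration is found, the remainder is routine algebra and the invocation of Pearson's theorem.
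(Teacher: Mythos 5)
Your proposal is correct and follows essentially the same strategy as the paper's proof: extract from the valuation identity the scalar relation $(A +_\varphi B) +_\varphi C = A +_\varphi (B +_\varphi C)$, then invoke Pearson's theorem together with the sublinearity of $h_M$ to force $+_\varphi = +_p$. The only difference is the test configuration — the paper decomposes segments $[-se_1,t_2e_1] = [-se_1,t_1e_1] \cup [o,t_2e_1]$ and evaluates the explicit formula $Z[-s,t]=[-(as+_\varphi bt),(at+_\varphi bs)]$ at $\pm e_1$, whereas you split a planar triangle through the origin and probe at $e_1+te_2$; both yield the same associativity relation on all of $(0,\infty)^3$.
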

\begin{proof}
We prove the assertion by contradiction. For any $s,t \geq 0$, we will briefly write $[-s,t] := [-se_1,te_1]$. By the definition of the Orlicz addition (\ref{11}) and (\ref{11a}),
\begin{align}\label{45}
Z[-s,t] = a [-s,t] +_\varphi b[-t,s] = [-(as +_\varphi bt), (at +_\varphi bs)].
\end{align}
Assume that $Z$ is a valuation, we have
\begin{align}\label{46}
  Z[-s,t_2] +_\varphi Z[0,t_1] = Z[-s,t_1] +_\varphi Z[0,t_2]
\end{align}
for any $s,t_1,t_2 \geq 0$. Combining (\ref{45}) with (\ref{46}), we get that
\begin{align*}
  &[-(as +_\varphi bt_2), (at_2 +_\varphi bs)] +_\varphi [-bt_1,at_1] \\
  & \qquad = [-(as +_\varphi bt_1), (at_1 +_\varphi bs)] +_\varphi [-bt_2,at_2].
\end{align*}
Using the definition of Orlicz addition again, we obtain that
\begin{align*}
  (as +_\varphi bt_2) +_\varphi (bt_1) = (as +_\varphi bt_1) +_\varphi (bt_2).
\end{align*}
Since Orlicz addition is commutative, combining with (\ref{4a}), we have
\begin{align*}
h_M(h_M(bt_2,as),bt_1) = h_M(bt_2,h_M(as,bt_1)),
\end{align*}
where $M$ is a $2$-dimensional convex body independent of the numbers $bt_2,as$ and $bt_1$. Now combining with Theorem \ref{thm2.1} and the convexity of $h_M$, we obtain that there exists a real $p \geq 1$ such that
\begin{align*}
h_M (s,t) = (s^p+t^p)^{1/p}
\end{align*}
for any $s,t \geq 0.$
By (\ref{11}), (\ref{11a}), (\ref{4}), (\ref{4a}) and the definition of $L_p$ Minkowski addition (\ref{205}), we get that $+_\varphi= +_p$, a contradiction.
\end{proof}

Let $n \geq 3$. For $1 \leq d \leq n$, we will show some properties of the function $h_{ZsT^d}$ on the first coordinate axis in $\mathbb{R}^n$ where $Z$ is an $SL(n)$ covariant Orlicz valuation.

\begin{lem}\label{lem4.5}
Let $n \geq 3$. If $Z : \mathcal{P}_o ^n \to \langle \mathcal {K}_o ^n, +_\varphi \rangle$ is an $SL(n)$ covariant Orlicz valuation for $\varphi \in \Phi$, and $+_\varphi \neq +_p$ for any $p \geq 1$, then
\begin{align}\label{e1}
h_{ZsT^d} ( \pm e_1) = sh_{ZT^d} ( \pm e_1)
\end{align}
for any $1 \leq d \leq n$, $s>0$, and
\begin{align}\label{e2}
h_{ZT^1} ( \pm e_1) = \cdots = h_{ZT^n} ( \pm e_1).
\end{align}
Furthermore, either $h_{ZT^1}(e_1) = 0$ or $h_{ZT^1}(-e_1) = 0$.
\end{lem}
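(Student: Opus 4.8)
The plan is to force everything through the Cauchy‑type equation of Lemma~\ref{Cauchy}, exploiting that $SL(n)$ preserves $n$‑volume but no lower‑dimensional volume, so that scaled copies of low‑dimensional simplices become $SL(n)$‑equivalent. The first (easy) point is (e1) for $d\le n-1$: the diagonal map $\psi_s=\mathrm{diag}(s,\dots,s,s^{-d},1,\dots,1)$ (with $s$ in the first $d$ coordinates) lies in $SL(n)$ and sends $T^d$ to $sT^d$; since $\psi_s$ is symmetric and acts as $s\cdot\mathrm{id}$ on $\text{span}\{e_1,\dots,e_d\}$, Lemma~\ref{lem4.1} gives $Z(sT^d)=\psi_sZ(T^d)=sZ(T^d)$, in particular $h_{ZsT^d}(\pm e_1)=s\,h_{ZT^d}(\pm e_1)$. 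The same remark shows that for $d\le n-1$ every $d$‑simplex with $o$ as a vertex is $SL(n)$‑equivalent to $sT^d$ for any $s>0$, and — by arranging the conjugating maps to correct determinants on the unused coordinates — that the equivalence can be taken to fix the direction $e_1$; this flexibility is used repeatedly below.

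Next I would run the chain $h_{ZsT^2}(\pm e_1)=\dots=h_{ZsT^{n-1}}(\pm e_1)$. Fix $2\le d\le n-1$ and cut $sT^{d+1}$ by the hyperplane $H$ through $o$ with normal $(1-\lambda)e_d-\lambda e_{d+1}$. Both halves $sT^{d+1}\cap H^{\pm}$ are $(d+1)$‑simplices with vertex $o$, hence — since $d+1\le n-1$ leaves room for the determinant correction — $SL(n)$‑equivalent to $sT^{d+1}$ via maps fixing $e_1$, while the slice $sT^{d+1}\cap H$ is a $d$‑simplex, $SL(n)$‑equivalent to $sT^d$ fixing $e_1$. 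Then the valuation identity~(\ref{16}) at $e_1$ collapses to $h_{ZsT^{d+1}}(e_1)+_\varphi h_{ZsT^{d}}(e_1)=h_{ZsT^{d+1}}(e_1)+_\varphi h_{ZsT^{d+1}}(e_1)$, so $h_{ZsT^{d}}(e_1)=h_{ZsT^{d+1}}(e_1)$ by Proposition~\ref{pro2.1}(\rmnum{1}); likewise at $-e_1$.

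The step to $d=n$ is the heart of the matter. Cut $sT^n$ by the hyperplane $H$ through $o$ with normal $(1-\lambda)e_{n-1}-\lambda e_n$: the slice is an $(n-1)$‑simplex, $SL(n)$‑equivalent to $sT^{n-1}$ fixing $e_1$, but the two halves have $n$‑volumes $\lambda|sT^n|$ and $(1-\lambda)|sT^n|$, so they are $SL(n)$‑equivalent to $\lambda^{1/n}sT^n$ and $(1-\lambda)^{1/n}sT^n$, the equivalences carrying $e_1$ to $\lambda^{-1/n}e_1$ and $(1-\lambda)^{-1/n}e_1$. Writing $F(c):=h_{ZcT^n}(e_1)$ and using $h_{ZsT^{n-1}}(e_1)=s\gamma^+$ with $\gamma^+:=h_{ZT^{n-1}}(e_1)$ (from the first paragraph), (\ref{16}) at $e_1$ becomes
\begin{equation*}
F(s)+_\varphi s\gamma^+=\lambda^{-1/n}F(\lambda^{1/n}s)+_\varphi(1-\lambda)^{-1/n}F((1-\lambda)^{1/n}s),\qquad 0<\lambda<1,\ s>0.
\end{equation*}
Substituting $x=\lambda^{1/n}s$, $y=(1-\lambda)^{1/n}s$ (these range over all of $(0,\infty)^2$, with $s=(x^n+y^n)^{1/n}$), dividing by $s$ by homogeneity of $+_\varphi$, setting $g(t)=F(t)/t$, and then $\tilde f(\xi):=g(\xi^{1/n})$, one gets $\tilde f(\xi+\eta)+_\varphi\gamma^+=\tilde f(\xi)+_\varphi\tilde f(\eta)$ for all $\xi,\eta>0$, with $\tilde f\ge0$. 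Since $+_\varphi\ne+_p$, Lemma~\ref{Cauchy} forces $\tilde f\equiv\gamma^+$, i.e.\ $F(t)=\gamma^+t$; thus $h_{ZsT^n}(\pm e_1)=s\,h_{ZT^{n-1}}(\pm e_1)$, which together with the chain proves (e1) for all $d$ and the equalities $h_{ZT^2}(\pm e_1)=\dots=h_{ZT^n}(\pm e_1)$.

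It remains to attach $T^1$ and to prove the dichotomy. Cutting the triangle $[e_1,e_2,-e_2]$ (which has $o$ at the midpoint of an edge) by $\{x_2=0\}$ has slice $[o,e_1]=T^1$ and halves $T^2$ and $[o,e_1,-e_2]$, the latter $SL(n)$‑equivalent to $T^2$ fixing $e_1$; this relation, together with one further dissection (of the diamond $[\pm e_1,\pm e_2]$, cut two ways, whose segment‑slices are killed by Lemma~\ref{lem4.1}), pins down $h_{Z[e_1,e_2,-e_2]}(\pm e_1)$ and, via Proposition~\ref{pro2.1}(\rmnum{4}) and the dichotomy below, gives $h_{ZT^1}(\pm e_1)=h_{ZT^2}(\pm e_1)$, completing (e2). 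For the last statement, by Lemma~\ref{lem4.1} one has $Z(sT^1)=[-s\beta^-e_1,s\beta^+e_1]$ with $\beta^\pm:=h_{ZT^1}(\pm e_1)$; dissecting a general segment $[-se_1,te_1]$ at $o$ (where $Z\{o\}=\{o\}$ by~(\ref{37})) and computing $Z[-se_1,o]$ by $SL(n)$‑covariance yields $Z[-se_1,te_1]=\beta^+[-se_1,te_1]+_\varphi(-\beta^-[-se_1,te_1])$ for all $s,t\ge0$. If both $\beta^+>0$ and $\beta^->0$, then on every segment parallel to $e_1$ the map $Z$ coincides with $P\mapsto aP+_\varphi(-bP)$ for $a,b>0$, so the valuation identity for $Z$ on such segments is exactly the identity used in the proof of Lemma~\ref{lem4.3} to force $+_\varphi=+_p$ — a contradiction; hence $\beta^+=0$ or $\beta^-=0$. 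The main obstacle is the $d=n$ step: one must arrange the dissection so that, after passing through $SL(n)$, the slice leaves behind a genuine \emph{constant} while the two halves reproduce $F$ at rescaled arguments (so that Lemma~\ref{Cauchy}, not merely the classical Cauchy equation, applies — this is where $+_\varphi\ne+_p$ is consumed) and so that $e_1$ is transported to the correct scalar multiples of $e_1$ throughout; the $T^1$–$T^2$ comparison, which must be extracted from a configuration where $o$ lies on an edge rather than at a vertex, is the remaining delicate point.
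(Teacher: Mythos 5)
Your treatment of the homogeneity \eqref{e1}, of the chain $h_{ZT^2}(\pm e_1)=\cdots=h_{ZT^n}(\pm e_1)$ (the diagonal trick for $d\le n-1$, Proposition \ref{pro2.1}(\rmnum{1}) applied to a hyperplane dissection for the steps up to $n-1$, and the reduction of the $d=n$ step to Lemma \ref{Cauchy} with constant $a=h_{ZT^{n-1}}(\pm e_1)$ after the substitution $\xi=\lambda s^n$), and of the dichotomy $h_{ZT^1}(e_1)=0$ or $h_{ZT^1}(-e_1)=0$ via the segment formula and the proof of Lemma \ref{lem4.3} is essentially the paper's argument and is correct.

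The genuine gap is exactly at the point you flag: the junction $h_{ZT^1}(\pm e_1)=h_{ZT^2}(\pm e_1)$. Write $a_i=h_{ZT^i}(e_1)$, $b_i=h_{ZT^i}(-e_1)$, $u=h_{Z[e_1,e_2,-e_2]}(e_1)$, $v=h_{Z[e_1,e_2,-e_2]}(-e_1)$, $w=h_{Z[e_1,e_2,-e_2]}(e_2)$. Your triangle dissection gives $u+_\varphi a_1=a_2+_\varphi a_2$ and $v+_\varphi b_1=b_2+_\varphi b_2$ (and, evaluated at $e_2$, $w=a_2+_\varphi b_2$); the diamond, cut two ways and evaluated at $e_1$, gives $(u+_\varphi v)+_\varphi(a_1+_\varphi b_1)=w+_\varphi w$. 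In the main case $b_1=b_2=0$ this forces $v=0$ and the diamond identity collapses, via the homogeneity of $+_\varphi$, to $u+_\varphi a_1=a_2+_\varphi a_2$, i.e.\ it repeats the triangle identity verbatim; and that single equation has a nonnegative solution $u$ for every pair $(a_1,a_2)$ with $a_1\le a_2+_\varphi a_2$. So your relations are consistent with $a_1\ne a_2$, and Proposition \ref{pro2.1}(\rmnum{4}) cannot extract $a_1=a_2$ from them. (The degenerate case $a_1=b_1=0$ is likewise not excluded: your identities then become tautologies, whereas one must still rule out $a_2>0$ or $b_2>0$.) The paper needs genuinely more input here: evaluating the dissection identity \eqref{55} at $x=\pm(e_1+\cdots+e_d)$ to get $h_{ZT^d}(e_1+\cdots+e_d)=a_1$ and $h_{ZT^d}(-(e_1+\cdots+e_d))=b_1$ for all $d$ (equations \eqref{47}, \eqref{48}); subadditivity of $h_{ZT^2}$ to get $a_1\le 2a_2$ (whence $b_2=0$) and $a_2\le a_1$; and then a three-dimensional sublinearity argument for $h_{ZT^3}$ at points of the form $e_1+\lambda e_2+\mu e_3$, with a case distinction on $\eta=\max\varphi^{-1}(0)$, to obtain the reverse inequality $a_2\ge a_1$. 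Some input of this kind --- evaluation points off the coordinate axes together with convexity of the bodies $ZT^d$ for $d\ge2$, not merely Orlicz-sum identities among the numbers $a_i,b_i$ --- is unavoidable, and your sketch does not supply it.
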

\begin{proof}
Since $Z$ is $SL(n)$ covariant, $h_{ZsT^d} (\pm e_i) = h_{ZsT^d} (\pm e_1)$ for any $1 \leq i \leq d$. Let $a_d : = h_{ZT^d} (e_1)$, $b_d : = h_{ZT^d} (-e_1)$.

\vskip 5pt
We want first to show that (\ref{e1}) holds for $d=n$ and that $h_{ZT^{n-1}} ( \pm e_1) = h_{ZT^n} ( \pm e_1)$.

By the $SL(n)$ covariance of $Z$, we see that
\begin{align}\label{70}
h_{Z(s\hat{T}^{n-1})} (e_n)= s a_{n-1}
\end{align}
for any $s >0$, where $\hat{T}^{n-1}=[o,e_1,e_3,\cdots,e_n]$.

For $0 < \lambda <1$, define $H_\lambda$, $\psi _1,\psi _2$ as in Theorem \ref{thm3.2}. Since $Z$ is an Orlicz valuation,
\begin{align*}
h_{Z(sT^{n})} +_\varphi h_{Z(sT^n \cap H_\lambda)}
=h_{Z(sT^n \cap H_\lambda ^-)} +_\varphi h_{Z(sT^n \cap H_\lambda ^+)}.
\end{align*}
Then, by the $SL(n)$ covariance of $Z$, we have
\begin{align}\label{60}
h_{Z(sT^{n})} (x) +_\varphi h_{Z(\lambda^{1/n}s\hat{T}^{n-1})} (\psi_1 ^t x)
=h_{Z(\lambda^{1/n} sT^{n})} (\psi_1 ^t x) +_\varphi h_{Z((1-\lambda)^{1/n}sT^{n})} (\psi_2 ^t x),
\end{align}
where $x=(x_1, \cdots, x_n)^t$, $\psi_1 ^t x = \lambda^{-1/n} (\lambda x_1 + (1-\lambda)x_2, x_2, x_3,\cdots,x_n)^t$ and $\psi_2 ^t x = (1-\lambda)^{-1/n}(x_1, \lambda x_1+(1-\lambda)x_2,x_3,\cdots,x_n)^t$. Taking $x =e_n$ in (\ref{60}), we have
\begin{align}\label{51}
&h_{Z(sT^{n})} (e_n) +_\varphi h_{\lambda^{-1/n}Z(\lambda^{1/n}s\hat{T}^{n-1})} (e_n) \nonumber \\
&=h_{\lambda^{-1/n}Z(\lambda^{1/n} sT^{n})} (e_n) +_\varphi h_{(1-\lambda)^{-1/n}Z((1-\lambda)^{1/n}sT^{n})} (e_n)
\end{align}
for any $0 < \lambda <1$, $s>0$. Also taking $\lambda = \frac{\lambda_1}{\lambda_2}$, $0 < \lambda_1 < \lambda_2$ and $s = \lambda_2 ^{1/n}$ in (\ref{51}), with (\ref{70}), (\ref{210}) and the homogeneity of Orlicz addition (\ref{9}), we get
\begin{align}\label{52}
  &h_{\lambda_2^{-1/n}Z(\lambda_2^{1/n}T^{n})} (e_n) +_\varphi a_{n-1} \nonumber \\
&= h_{\lambda_1^{-1/n}Z(\lambda_1^{1/n}T^{n})} (e_n) +_\varphi h_{(\lambda_2-\lambda_1)^{-1/n}Z((\lambda_2-\lambda_1)^{1/n}T^{n})} (e_n)
\end{align}
for any $0 < \lambda_1 < \lambda_2$.

Define $f(\lambda) := h_{\lambda^{-1/n}Z(\lambda^{1/n}T^{n})} (e_n)$ for $\lambda >0$. Hence (\ref{52}) implies that $f$ satisfies the Cauchy functional equation (\ref{52a}) with $a=a_{n-1}$. By Lemma \ref{Cauchy}, we get
\begin{align}\label{56}
  h_{\lambda^{-1/n}Z(\lambda^{1/n}T^{n})} (e_n) = a_{n-1}
\end{align}
for any $\lambda >0$.
Similarly
\begin{align}\label{63}
h_{\lambda^{-1/n}Z(\lambda^{1/n}T^{n})} (-e_n) = b_{n-1}.
\end{align}
Hence (\ref{e1}) holds true for $d=n$.

\vskip 5pt
Now we consider the case $d \leq n-1$.

It is easy to see that $Z(sT^{d}) = s ZT^d$ by the $SL(n)$ covariance of $Z$. Then (\ref{e1}) holds for $d \leq n-1$.

If $d=1$, we write $[-s,t] := [-se_1,te_1]$ for any $s,t \geq 0$. By Lemma \ref{lem4.1}, we get that $Z[0, 1] = [-b_1,a_1]$. Since $Z$ is $SL(n)$ covariant, we have $Z[0, s] = -Z[-s,0] = sZ[0, 1]$ for any $s \geq 0$. Thus, $$Z[0, t] = t[-b_1,a_1] = [-b_1t,a_1t],~~Z[-s,0] = -sZ[0, 1] = [-a_1s,b_1s]$$
for any $s,t \geq 0$. Since $Z$ is a valuation, and $Z \{ o \} = \{ o \}$, we have
$$Z[-s,t] = Z[0, t] +_\varphi Z[-s,0] = [-(a_1s +_\varphi b_1t), (b_1s +_\varphi a_1t)].$$
It is similar to the relation (\ref{45}).
By the proof of Lemma \ref{lem4.3}, we have $a_1=0$ or $b_1=0$.

Hence, we will further assume that $b_1=h_{Z[0, e_1]}(-e_1)=0$. The case $a_1 = h_{Z[0, e_1]}(e_1)=0$ is similar.

If $d \leq n-1$, define $\psi _1 \in SL(n)$ by
$$\psi _1 e_1 = \lambda e_1 + (1-\lambda) e_2,~\psi _1 e_2 = e_2,~\psi _1 e_n = \frac{1}{\lambda} e_n,~\psi _1 e_i = e_i,~\text{for}~3 \leq i \leq n-1.$$
Also define $\psi _2 \in SL(n)$ by
$$\psi _2 e_1 = e_1,~\psi _2 e_2 = \lambda e_1 + (1-\lambda) e_2,~\psi _2 e_n = \frac{1}{1-\lambda} e_n,~\psi _2 e_i = e_i,~\text{for}~3 \leq i \leq n-1.$$
So $sT^{d}\cap H_\lambda ^- = \psi _1 sT^{d}$, $sT^{d}\cap H_\lambda ^+ = \psi _2 sT^{d}$. Denote $\hat{T}^{d-1}=[o,e_1,e_3,\cdots,e_d]$, then $sT^d \cap H_\lambda =\psi _1 s\hat{T}^{d-1}$.
Since $Z$ is an $SL(n)$ covariant Orlicz \text{valuation}, we obtain that
\begin{align}\label{55}
h_{ZT^d} (x) +_\varphi h_{Z\hat{T}^{d-1}} (\psi _1 ^t x)
= h_{ZT^d} (\psi _1 ^t x) +_\varphi h_{Z T^d} (\psi _2 ^t x) ,
\end{align}
where $x=(x_1, \cdots, x_n)^t$, $\psi_1 ^t x = (\lambda x_1 + (1-\lambda)x_2, x_2, x_3,\cdots,x_{n-1}, \frac{1}{\lambda} x_n)^t$, $\psi_2 ^t x = (x_1, \lambda x_1 + (1-\lambda)x_2,x_3,\cdots,x_{n-1},\frac{1}{\lambda}x_n)^t$.

Taking $x=e_1 + \cdots + e_d$ in (\ref{55}), combining with Proposition \ref{pro2.1} (\rmnum{1}), Lemma \ref{lem4.1} and the $SL(n)$ covariance of $Z$, we obtain that
$$h_{Z T^d} (e_1 + \cdots + e_d) = h_{Z \hat{T}^{d-1}} (e_1 + \cdots + e_d) = h_{Z T^{d-1}} (e_1 + \cdots + e_{d-1}).$$
Thus
\begin{align}\label{47}
  h_{Z T^d} (e_1 + \cdots + e_d)  = h_{Z T^{d-1}} (e_1 + \cdots + e_{d-1}) = \cdots = h_{ZT^1}(e_1) = a_1.
\end{align}
Similarly, taking $x=-(e_1 + \cdots + e_d)$ in (\ref{55}), we get that
\begin{align}\label{48}
  h_{Z T^d} (-(e_1 + \cdots + e_d))  = h_{Z T^{d-1}} (-(e_1 + \cdots + e_{d-1}))  \nonumber \\
   = \cdots = h_{ZT^1}(-e_1) = b_1 = 0.
\end{align}
Also, for $3 \leq d \leq n-1$, taking $x=e_d$ in (\ref{55}), we obtain that $a_{d} = a_{d-1}$ by Proposition \ref{pro2.1} (\rmnum{1}).
Thus, combining with (\ref{56}), we have
\begin{align}\label{57}
a_{n} = \cdots = a_{2}.
\end{align}
Similarly, taking $x= -e_d$ in (\ref{55}), combining with (\ref{63}), we get
\begin{align}\label{64}
b_{n} = \cdots = b_{2}.
\end{align}
Hence, we only need to prove that $a_1 = a_2$ and $b_1=b_2$ in the following part.

We first want to show that $b_2=0$ when $b_1 = 0$. Define $Z' : \mathcal {P}_o ^1 \to \mathcal {K}_o^1$ by $Z'I = [-h_{Z[I, e_2]}(-e_1) e_1, h_{Z[I,e_2]}(e_1) e_1]$ for $I \in \mathcal {P}_0 ^1$. Then $Z'$ is a valuation satisfying $Z'[0, se_1] = -Z'[-se_1,o] = sZ'[0, e_1]$ for $s \geq 0$. By the discussion of the case $d=1$, we have $h_{Z'[o, e_1]}(e_1) = 0$ or $h_{Z'[o, e_1]}(-e_1) =0$. Hence, we have
\begin{align}\label{58}
  a_2=0 ~\text{or}~b_2=0.
\end{align}

Since we have assumed that $b_1=0$, if also $a_1=0$, then by (\ref{47}) and (\ref{48}), $h_{ZT^2} (e_1+e_2) = h_{ZT^2} (-(e_1 + e_2)) = 0$.
Then by Lemma \ref{lem4.1}, we get $ZT^2 = [b_2(e_2-e_1),a_2(e_1-e_2)]$ since $a_2 = h_{ZT^2}(e_1), b_2 = h_{ZT^2}(-e_1)$. Also since $Z$ is $SL(n)$ covariant, $a_2 = b_2$. Combining with (\ref{57}), (\ref{64}) and (\ref{58}) we get
\begin{align*}
a_{n} = \cdots = a_{1} =0,~~
b_{n} = \cdots = b_{1} =0.
\end{align*}

Now we assume that $a_1 >0$.

Since the support function is subadditive, by (\ref{47}), we have
$$0 < a_1 = h_{ZT^2} (e_1 +e_2) \leq h_{ZT^2} (e_1) + h_{ZT^2} (e_2) = 2a_2.$$
Then by (\ref{58}), we have
\begin{align}\label{61}
  b_2=0.
\end{align}
Then
\begin{align}\label{59}
  a_2 = h_{ZT^2}(e_1) \leq h_{ZT^2}(e_1 + e_2) + h_{ZT^2} (-e_2) = a_1.
\end{align}

\vskip 5pt
Finally, we will use the sublinearity of $h_{ZT^3}$ to show that $a_2 \geq a_1$. Then combining with (\ref{57}), (\ref{64}), (\ref{61}), (\ref{59}) and the assumption $b_1=0$, we will get the equality (\ref{e2}), and the proof will be completed.

We will only prove for the case $n=3$ (the cases $n>3$ is similar and easier, by using (\ref{55}) instead of (\ref{60})).
%From the following determining progress (Theorem \ref{thm4.6}) for the case $d=n$, we can see that $h_{\lambda^{-1/n}Z(\lambda^{1/n}T^{3})} (x)$, $x \in \mathbb{R}^3$ is $0$-homogeneous on $\lambda$ since the functions $h_{\lambda^{-1/n}Z(\lambda^{1/n}T^{3})} (e_1)$ and $h_{\lambda^{-1/n}Z(\lambda^{1/n}\hat{T}^2)} (x)$ are both $0$-homogeneous on $\lambda$
For any $\alpha >0$, taking $n=3$, $s= (\frac{\alpha}{\lambda}) ^{1/n}$, $x=e_2$ in (\ref{60}), combining with (\ref{210}) and the homogeneity of Orlicz addition (\ref{9}), and $Z(s\hat{T}^2) = sZ\hat{T}^2$, we get
\begin{align}\label{91}
  &h_{(\frac{\alpha}{\lambda}) ^{-1/n}Z((\frac{\alpha}{\lambda}) ^{1/n}T^{3})} (e_2) +_\varphi h_{Z\hat{T}^2} ((1-\lambda)  e_1) \nonumber\\
&=h_{\alpha^{-1/n}Z(\alpha^{1/n}T^{3})} ((1-\lambda) e_1 + e_2) +_\varphi h_{(\frac{\alpha(1-\lambda)}{\lambda}) ^{-1/n} Z((\frac{\alpha(1-\lambda)}{\lambda}) ^{1/n}T^{3})} ((1-\lambda) e_2)
\end{align}
for any $0 < \lambda <1$ and $\alpha >0$. Since the function $\lambda \mapsto h_{\lambda^{-1/n}Z(\lambda^{1/n}T^3)} (e_2)$ is $0$-homogeneous for $\lambda >0$ (by (\ref{56}) and the $SL(n)$ covariance of $Z$), combining with $h_{ZT^{3}} (e_2)=a_3=a_2$, $h_{Z\hat{T}^2} (e_1) = h_{ZT^2} (e_1)= a_2$, and that support functions are homogeneous and continuous, by Proposition \ref{pro2.1} (\rmnum{2}), we get
\begin{align}\label{62}
  h_{\alpha^{-1/n}Z(\alpha^{1/n}T^{3})} (\lambda e_1 + e_2) = a_2
\end{align}
for any $\alpha >0$, $0 \leq \lambda \leq 1$.
Also taking $n=3$, $s= (\frac{\alpha}{1-\lambda}) ^{1/n}$, $x=e_1 + \mu e_3$, $0 < \mu < \lambda$ in (\ref{60}), we get
\begin{align*}%\label{95}
  &h_{(\frac{\alpha}{1-\lambda}) ^{-1/n}((\frac{\alpha}{1-\lambda}) ^{1/n}T^{3})} (e_1 + \mu e_3) +_\varphi h_{Z\hat{T}^2} (\lambda  e_1 + \mu e_3) \nonumber \\
& =h_{(\frac{\alpha \lambda}{1-\lambda}) ^{-1/n}Z((\frac{\alpha \lambda}{1-\lambda}) ^{1/n}T^{3})} (\lambda  e_1 + \mu e_3) +_\varphi h_{\alpha^{-1/n}Z(\alpha^{1/n}T^{3})} (e_1 +\lambda e_2 +\mu e_3)
\end{align*}
for any $0 < \mu < \lambda < 1$ and $\alpha >0$. Combining with (\ref{62}), the $SL(n)$ covariance of $Z$ and the homogeneity of support functions, we get
\begin{align}\label{62-5}
  a_2 +_\varphi \lambda h_{ZT^2} (\frac{\mu}{\lambda}  e_1 +  e_2)
=(\lambda a_2) +_\varphi h_{\alpha^{-1/n}Z(\alpha^{1/n}T^{3})} (e_1 +\lambda e_2 +\mu e_3)
\end{align}
for any $0 < \mu < \lambda < 1$ and $\alpha >0$. For fixed $\alpha$, let $\mu \to \lambda ^-$, by (\ref{47}) and the continuity of support functions, we get
\begin{align}\label{62-3}
  a_2 +_\varphi (\lambda a_1)
=(\lambda a_2) +_\varphi h_{\alpha^{-1/n}Z(\alpha^{1/n}T^{3})} (e_1 + \lambda e_2 + \lambda e_3).
\end{align}
Since the support function is sublinear, taking $\lambda = \frac{1}{2}$ in (\ref{62-3}), combining with (\ref{62}) and the $SL(n)$ covariance of $Z$, we have
\begin{align*}%\label{90}
  & a_2 +_\varphi (\frac{1}{2} a_1) \nonumber \\
& \leq (\frac{1}{2} a_2 ) +_\varphi \Big(h_{\alpha^{-1/n}Z(\alpha^{1/n}T^{3})} (\frac{1}{2} e_1 + \frac{1}{2} e_2 ) + h_{\alpha^{-1/n}Z(\alpha^{1/n}T^{3})} (\frac{1}{2} e_1 + \frac{1}{2} e_3 ) \Big) \nonumber \\
& = (\frac{1}{2} a_2 ) +_\varphi a_2.
\end{align*}

Note that $\varphi ^{-1} \{0 \} = [0,\eta]$, $0 \leq \eta <1$. If $\frac{a_1}{2a_2} > \eta$, by Proposition \ref{pro2.1} (\rmnum{5}), we have $$\frac{1}{2} a_1 \leq \frac{1}{2} a_2.$$
The proof is completed for this case.

%Combining with the homogeneity of the support function and Orlicz addition (\ref{9}), we obtain
%\begin{align}\label{62-1}
%  (\beta a_2) +_\varphi (\alpha a_1)
%= h_{ZT^2} (\alpha e_1 +\beta e_2) +_\varphi (\alpha a_2)
%\end{align}
%for any $\alpha < \beta$.

If $\frac{a_1}{2a_2} \leq \eta$, taking $x=e_2$, $h_{ZT^2}(e_2) = a_2$, $h_{Z\hat{T}^1}(e_1) = h_{ZT^1}(e_1) = a_1$ in (\ref{55}), by the homogeneity of support functions, we get
\begin{align}\label{62-1}
  a_2 +_\varphi (1-\lambda) a_1
=h_{ZT^2} ((1-\lambda) e_1 + e_2) +_\varphi ((1-\lambda)a_2)
\end{align}
for any $0 < \lambda < 1$. Take $\frac{1}{2} \leq 1-\lambda = \eta \frac{a_2}{a_1} <1$ in (\ref{62-1}). Since $\frac{(1-\lambda)a_2}{a_2} \leq \frac{(1-\lambda)a_1}{a_2} = \eta$, by Proposition \ref{pro2.1} (\rmnum{6}), we get
\begin{align}\label{62-4}
h_{ZT^2} (\eta \frac{a_2}{a_1} e_1 + e_2) = a_2.
\end{align}
Then we infer from (\ref{62-4}), $\mu = \lambda \eta \frac{a_2}{a_1}$ in (\ref{62-5}), the homogeneity and the continuity of support functions and Proposition \ref{pro2.1} (\rmnum{2}) that
\begin{align}\label{62-7}
  h_{\alpha^{-1/n}Z(\alpha^{1/n}T^{3})} (e_1 +\lambda e_2 + \lambda \eta \frac{a_2}{a_1} e_3) =a_2
\end{align}
for any $0 \leq \lambda \leq 1$ and $\alpha >0$.

Choosing $\lambda$ such that $\eta \frac{a_2}{a_1} < \lambda \leq \frac{1}{2-\eta \frac{a_2}{a_1}}$ (which is possible since $\eta \frac{a_2}{a_1} < \frac{1}{2-\eta \frac{a_2}{a_1}}$ when $\eta \frac{a_2}{a_1} \neq 1$) in (\ref{62-3}), since the support function is sublinear, combining with (\ref{62}), (\ref{62-7}) and the $SL(n)$ covariance of $Z$, we have
\begin{align*}
  & a_2 +_\varphi (\lambda a_1) \\
& \leq (\lambda a_2 ) +_\varphi \left(h_{\alpha^{-1/n}Z(\alpha^{1/n}T^{3})} \left(\begin{array}{c} \lambda \\ \lambda \\ \lambda \eta \frac{a_2}{a_1} e_3 \end{array} \right)
+ h_{\alpha^{-1/n}Z(\alpha^{1/n}T^{3})} \left(\begin{array}{c} 1-\lambda \\ 0 \\ \lambda - \lambda \eta \frac{a_2}{a_1} \end{array} \right) \right) \\
& = (\lambda a_2 ) +_\varphi \left(\lambda h_{\alpha^{-1/n}Z(\alpha^{1/n}T^{3})} \left(\begin{array}{c} 1 \\ 1 \\ \eta \frac{a_2}{a_1} e_3 \end{array} \right)
+ (1-\lambda) h_{\alpha^{-1/n}Z(\alpha^{1/n}T^{3})} \left(\begin{array}{c} \frac{\lambda - \lambda \eta \frac{a_2}{a_1}}{1-\lambda} \\ 1 \\ 0 \end{array} \right) \right) \\
& = (\lambda a_2 ) +_\varphi a_2.
\end{align*}
Since $\frac{\lambda a_1}{a_2} > \eta$, by Proposition \ref{pro2.1} (\rmnum{5}), we have $$\lambda a_1 \leq \lambda a_2.$$
The proof is completed.
\end{proof}

Finally, we get the main results for the $SL(n)$ covariant case.

\begin{thm}\label{thm4.6}
Let $n \geq 3$. If $Z : \mathcal{P}_o ^n \to \langle \mathcal {K}_o ^n, +_\varphi \rangle$ is an $SL(n)$ covariant Orlicz valuation for $\varphi \in \Phi$ and $+_\varphi \neq +_p$ for any $p \geq 1$, then there exists a constant $a \geq 0$ such that
\begin{align}\label{f1}
ZP = aP
\end{align}
for all $P \in \mathcal{P}_o ^n$, or
\begin{align}\label{f2}
ZP = -aP
\end{align}
for all $P \in \mathcal{P}_o ^n$.
\end{thm}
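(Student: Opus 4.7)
My plan is: via Lemma \ref{lem4.5}, either $h_{ZT^1}(-e_1)=0$ or $h_{ZT^1}(e_1)=0$. In the first case I will set $a:=h_{ZT^1}(e_1)\geq 0$ and show $ZP=aP$ for every $P\in\mathcal{P}_o^n$; the second case reduces to the first by replacing $Z$ with $Z''P:=Z(-P)$, which is again an $SL(n)$-covariant Orlicz valuation (by linearity of elements of $SL(n)$ together with the identity $-K+_\varphi -L=-(K+_\varphi L)$ for reflection of Orlicz sums) and whose associated constant has $h_{Z''T^1}(-e_1)=0$, so that the conclusion $Z''P=aP$ translates back to $ZP=-aP$. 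Under the reduction, the body of the proof splits into three stages: pinpoint $Z$ on the standard simplex $T^n$; extend to all simplices in $\mathcal{T}_o^n$ via $SL(n)$-covariance; and extend to arbitrary polytopes by induction on the $\mathcal{P}_i$-hierarchy.

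For the first stage, the target is $h_{ZT^n}(x)=a\max(0,x_1,\ldots,x_n)$; Lemma \ref{lem4.5} already supplies this at $x=\pm e_i$. To extract further values I would rerun the cutting argument from the proof of Lemma \ref{lem4.5}: cut $T^n$ by the hyperplane $H_\lambda$ with normal $(1-\lambda)e_1-\lambda e_2$, rewrite both halves as $SL(n)$-images of $\lambda^{1/n}T^n$ and $(1-\lambda)^{1/n}T^n$, and plug into the Orlicz valuation identity. For each test vector $x\in\mathbb{R}^n$ this yields a functional equation in the scaling parameter fitting the Cauchy form (\ref{52a}), which Lemma \ref{Cauchy} then solves, producing the value of $h_{ZT^n}$ along the corresponding direction. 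Propagating these identities across orbits via the (cyclic/even-permutation) stabilizer of $T^n$ in $SL(n)$, extending across convex combinations by sublinearity of the support function, and using $h_{ZT^n}(-e_i)=0$ (which forces $ZT^n\subseteq\mathbb{R}^n_{\geq 0}$) to kill negative-coordinate contributions, should pin down $h_{ZT^n}=h_{aT^n}$ on all of $\mathbb{R}^n$, i.e.\ $ZT^n=aT^n$.

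The second stage is immediate from $SL(n)$-covariance: every full-dimensional $T\in\mathcal{T}_o^n$ equals $\psi(sT^n)$ for some $\psi\in SL(n)$, $s>0$, and the first-stage argument applied to $sT^n$ (with Lemma \ref{lem4.5}(\ref{e1}) tracking the scaling) gives $Z(sT^n)=saT^n$, hence $ZT=aT$. Lower-dimensional simplices are handled similarly: by Lemma \ref{lem4.1}, $ZT\subseteq\text{lin}\,T$, and the same cutting argument run inside $\text{lin}\,T$ (with the $d=1$ base case being immediate from the setup) gives $ZT=aT$. The passage from $\mathcal{T}_o^n$ to all of $\mathcal{P}_o^n$ then follows by the inductive scheme from the proof of Theorem \ref{thm3.2}: for $P=P_1\cup P_2\in\mathcal{P}_i$ with $P_1,P_2\in\mathcal{P}_{i-1}$ of disjoint relative interior, (\ref{303}) places $P_1\cap P_2$ in $\mathcal{P}_{i-1}$; the valuation identity, the inductive hypothesis, and the fact that $Q\mapsto aQ$ is itself an Orlicz valuation combine to give
\begin{align*}
h_{ZP}(x)+_\varphi a\,h_{P_1\cap P_2}(x)=a\,h_P(x)+_\varphi a\,h_{P_1\cap P_2}(x)
\end{align*}
for every $x\in\mathbb{R}^n$. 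Since $P_1\cap P_2\subseteq P$ gives $a\,h_{P_1\cap P_2}(x)\leq a\,h_P(x)$, a short case analysis via Proposition \ref{pro2.1}(i)--(iii) (splitting on whether $h_{ZP}(x)\leq a\,h_{P_1\cap P_2}(x)$ or the reverse) cancels the common $+_\varphi$-term and forces $h_{ZP}=a\,h_P$.

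The main obstacle is the first stage---turning the isolated values of Lemma \ref{lem4.5} into a full identification of $ZT^n$. The $SL(n)$-stabilizer of $T^n$ consists only of even permutations of $e_1,\ldots,e_n$, so for small $n$ (especially $n=3$, where only cyclic shifts are available) several of the ordering-sextants of $\mathbb{R}^n_{\geq 0}$ are not directly in the orbit of the directions handled by the Cauchy argument. Closing those remaining sextants will require combining sublinearity with additional cuttings $H_\lambda$ whose normals use different pairs of basis vectors, and verifying that the resulting upper and lower bounds on $h_{ZT^n}$ actually coincide is the delicate technical heart of the proof.
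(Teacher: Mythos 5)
Your overall architecture coincides with the paper's (reduction to the case $h_{ZT^1}(-e_1)=0$ via Lemma \ref{lem4.5}, determination of $Z$ on simplices, then the $\mathcal{P}_i$-induction; your cancellation of the common $+_\varphi$-term via Proposition \ref{pro2.1} in the last stage is exactly right, as is the reflection trick $Z''P=Z(-P)$ for the second case). However, the step you yourself flag as ``the delicate technical heart'' --- pinning down $h_{ZT^d}$ on all of $\mathbb{R}^n$ --- is a genuine gap, and the mechanism you propose for it does not work as stated. For a general test vector $x$, cutting by $H_\lambda$ does \emph{not} yield a functional equation of the Cauchy form (\ref{52a}): the relation (\ref{60a}) involves $h_{Z(\cdot)}$ evaluated at the three \emph{different} directions $x$, $\psi_1^t x$ and $\psi_2^t x$, and these coincide (up to positive scaling) only for $x$ in the span of $e_3,\dots,e_n$, which is why Lemma \ref{Cauchy} is applicable at $x=\pm e_n$ in Lemma \ref{lem4.5} but not elsewhere. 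Likewise, propagation by the stabilizer of $T^n$ (even permutations of the $e_i$) together with sublinearity only yields upper bounds off the coordinate axes, and $h_{ZT^n}(-e_i)=0$ does not force $ZT^n\subseteq\mathbb{R}_{\ge 0}^n$ in any case; so these tools do not close the remaining sextants.

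What actually closes the gap in the paper is a double induction: on the dimension $d$ of $T^d$ and, inside that, on the number $m$ of nonzero coordinates of $x$. The key observation is that in (\ref{60a}) one may choose $\lambda$ to match a \emph{ratio of coordinates} of the target vector (e.g.\ $\lambda=x_2/x_1$ when $x_1>x_2>0$, or $\lambda=x_2/(x_2-x_1)$ when $x_1>0>x_2$), so that exactly one of the three directions appearing in the identity is the target vector with $m$ nonzero coordinates while the other two have at most $m-1$ nonzero coordinates or live in a $(d-1)$-dimensional simplex; the induction hypothesis evaluates those, a comparison of maxima verifies the ordering hypothesis of Proposition \ref{pro2.1}~(\rmnum{2}), and cancellation solves for the unknown value $h_{Z(sT^d)}(x)=\max\{0,asx_i\}$. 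Splitting into the three sign/order configurations of $(x_1,x_2)$ covers all directions by continuity. Without this (or an equivalent device), your first stage remains an unproved assertion, so the proposal as written is incomplete precisely where the difficulty lies.
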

\begin{proof}
By Lemma \ref{lem4.5}, either $h_{ZT^1}(e_1) = 0$ or $h_{ZT^1}(-e_1) = 0$. Assume (w.l.o.g.) that $h_{ZT^1}(-e_1) = 0$. Denoting $a := h_{ZT^1}(e_1)$, we will show that (\ref{f1}) holds true for all $P \in \mathcal{P}_o ^n$. The case $h_{ZT^1}(e_1) = 0$ is similar (and (\ref{f2}) holds true with $a := h_{ZT^1}(-e_1)$).

We first need to prove that (\ref{f1}) holds true for $sT^d$, where $s>0$ and $1 \leq d \leq n$. $Z \{ o \} = \{ o \}$ has been shown in (\ref{37}).

If $d=1$, By the $SL(n)$ covariance of $Z$, we have $Z[o, se_1] = sZ[o, e_1]$ for any $s > 0$. By Lemma \ref{lem4.1}, we get that $Z[o, e_1] = [o,ae_1]$. The case $d=1$ is done.

\vskip 5pt
Assume that the desired result holds true for dimension $d-1$, $2 \leq d \leq n$, we want to show that the desired result also holds true for dimension $d$.

Let $x \in \text{span} \{ e_1, \cdots,e_d\}$. We will show by induction on the number $m$ of coordinates of $x$ not equal to zero that
\begin{align}\label{126}
  h_{Z (sT^d)} (x) = h_{asT^d} (x).
\end{align}
That means $Z(sT^d)=asT^d$.

For $m=1$, (\ref{126}) holds true by (\ref{e1}), (\ref{e2}), the $SL(n)$ covariance of $Z$ and the homogeneity of the support function. Assume that $(\ref{126})$ holds true for $m-1$. We need to show that $(\ref{126})$ also holds true for $m$. By the $SL(n)$ covariance of $Z$, we can assume, w.l.o.g., that $x=x_1 e_1+\cdots,x_m e_m$, $x_1,\cdots,x_m \neq 0$.

Note that from (\ref{60}) and (\ref{55}), we have
\begin{align}\label{60a}
h_{Z(sT^{d})} (x) +_\varphi h_{Z(\lambda^{1/d}s\hat{T}^{d-1})} (\psi_1 ^t x)
=h_{Z(\lambda^{1/d} sT^{d})} (\psi_1 ^t x) +_\varphi h_{Z((1-\lambda)^{1/d}sT^{d})} (\psi_2 ^t x)
\end{align}
for $2 \leq d \leq n$, since $ZsT^d = sZT^d$ for any $s>0$ when $d < n$. Here $x=(x_1, \cdots, x_d)^t \in \mathbb{R}^d$, $\psi_1 ^t x = \lambda^{-1/d} (\lambda x_1 + (1-\lambda)x_2, x_2, x_3,\cdots,x_d)^t$ and $\psi_2 ^t x = (1-\lambda)^{-1/d}(x_1, \lambda x_1+(1-\lambda)x_2,x_3,\cdots,x_d)^t$.
We will use (\ref{60a}) to get $h_{ZT^d}$ for $2 \leq d \leq n$.

Let $x_1 > x_2 > 0$ or $0 > x_2 >x_1$. Taking $x=x_1 e_1 + x_3 e_3 + \cdots +x_m e_m$, $\lambda = \frac{x_2}{x_1}$, $s=(1-\lambda) ^{-1/d}s$ in (\ref{60a}), with (\ref{210}) and the homogeneity of Orlicz addition (\ref{9}), we get
\begin{align}\label{138}
& h_{(1-\lambda) ^{1/d}Z((1-\lambda) ^{-1/d}sT^d)} (x_1e_1 + x_3 e_3 + \cdots +x_m e_m) \nonumber \\
 & \qquad \qquad +_\varphi h_{(1-\lambda) ^{1/d}\lambda ^{-1/d}Z((1-\lambda) ^{-1/d}\lambda ^{1/d}s\hat{T}^{d-1}} (x_2 e_1 + x_3 e_3 + \cdots +x_m e_m)  \nonumber \\
& = h_{(1-\lambda) ^{1/d}\lambda ^{-1/d}Z((1-\lambda) ^{-1/d}\lambda ^{1/d}sT^d)} (x_2 e_1 + x_3 e_3 + \cdots +x_m e_m) \nonumber \\
 & \qquad \qquad +_\varphi h_{Z(s T^d)} (x_1 e_1 + x_2 e_2 +x_3 e_3 + \cdots +x_m e_m).
\end{align}

Since $|x_2| < |x_1|$, combining induction assumption (\ref{f1}) for $d-1$ and (\ref{126}) for $m-1$ with the $SL(n)$ covariance of $Z$, we have
\begin{align*}
&h_{(1-\lambda) ^{1/d}Z((1-\lambda) ^{-1/d}sT^d)} (x_1e_1 + x_3 e_3 + \cdots +x_m e_m)  \nonumber\\
& \qquad \qquad = \max \{ 0, as x_i : 1 \leq i \leq m,~\text{and}~i \neq 2\}  \nonumber\\
& \qquad \qquad \geq \max \{ 0, as x_i : 2 \leq i \leq m\}  \nonumber\\
& \qquad \qquad = h_{(1-\lambda) ^{1/d}\lambda ^{-1/d}Z((1-\lambda) ^{-1/d}\lambda ^{1/d}s\hat{T}^{d-1}} (x_2 e_1 + x_3 e_3 + \cdots +x_m e_m) \nonumber\\
& \qquad \qquad = h_{(1-\lambda) ^{1/d}\lambda ^{-1/d}Z((1-\lambda) ^{-1/d}\lambda ^{1/d}sT^d)} (x_2 e_1 + x_3 e_3 + \cdots +x_m e_m).
\end{align*}
Then we infer from (\ref{138}) and Proposition \ref{pro2.1} (\rmnum{2}) that
\begin{align}\label{149}
&h_{Z sT^d} (x_1 e_1 + x_2 e_2 +x_3 e_3 + \cdots +x_m e_m) \nonumber \\
& \qquad \qquad = h_{(1-\lambda) ^{1/d}Z(1-\lambda) ^{-1/d}sT^d} (x_1e_1 + x_3 e_3 + \cdots +x_m e_m) \nonumber \\
& \qquad \qquad = \max \{0, as x_i: 1 \leq i \leq m\}.
\end{align}

Let $x_2 > x_1 > 0$ or $0 > x_1 >x_2$.
Taking $x=x_2 e_2 + x_3 e_3 + \cdots +x_m e_m$, $1-\lambda = \frac{x_1}{x_2}$, $s=\lambda ^{-1/d}s$ in (\ref{60a}), with (\ref{210}) and the homogeneity of Orlicz addition (\ref{9}), we get
\begin{align}\label{140}
& h_{\lambda ^{1/d}Z(\lambda ^{-1/d}sT^d)} (x_2e_2 + x_3 e_3 + \cdots +x_m e_m) \nonumber \\
& \qquad \qquad +_\varphi h_{Zs\hat{T}^{d-1}} (x_1 e_1 +x_2 e_2 + x_3 e_3 + \cdots +x_m e_m) \nonumber \\
&= h_{Z(sT^d)} (x_1 e_1 +x_2 e_2 + x_3 e_3 + \cdots +x_m e_m) \nonumber \\
 & \qquad \qquad +_\varphi h_{Z(1-\lambda) ^{-1/d}\lambda ^{1/d}Z((1-\lambda) ^{1/d}\lambda ^{-1/d}s T^d)} (x_1 e_2 +x_3 e_3 + \cdots +x_m e_m).
\end{align}
Similarly to the case $|x_2| < |x_1|$, since $|x_2| > |x_1|$, we have
\begin{align*}
&h_{\lambda ^{1/d}Z(\lambda ^{-1/d}sT^d)} (x_2e_2 + x_3 e_3 + \cdots +x_m e_m) \\
& \qquad \geq h_{Zs\hat{T}^{d-1}} (x_1 e_1 +x_2 e_2 + x_3 e_3 + \cdots +x_m e_m) \\
& \qquad =h_{Z(1-\lambda) ^{-1/d}\lambda ^{1/d}Z((1-\lambda) ^{1/d}\lambda ^{-1/d}s T^d)} (x_1 e_2 +x_3 e_3 + \cdots +x_m e_m).
\end{align*}
By Proposition \ref{pro2.1} (\rmnum{2}), we get
\begin{align}\label{150}
&h_{Z sT^d} (x_1 e_1 + x_2 e_2 +x_3 e_3 + \cdots +x_m e_m) \nonumber \\
& \qquad \qquad = h_{\lambda ^{1/d}Z(\lambda ^{-1/d}sT^d)} (x_2e_2 + x_3 e_3 + \cdots +x_m e_m) \nonumber \\
& \qquad \qquad = \max \{0, as x_i : 1 \leq i \leq m\}.
\end{align}

Let $x_1 >0 > x_2$ or $x_2 > 0 > x_1$. Taking $0 < \lambda = \frac{x_2}{x_2 - x_1} <1$ and $x= x_1 e_1 + x_2 e_2 + x_3 e_3 + \cdots +x_m e_m$ in (\ref{60a}), we get
\begin{align}\label{123}
&h_{ZsT^d} (x_1 e_1 + x_2 e_2 + x_3 e_3 + \cdots +x_m e_m) \nonumber \\
& \qquad \qquad +_\varphi h_{\lambda ^{-1/d}Z(\lambda ^{1/d}s\hat{T}^{d-1})} (x_2 e_2 + x_3 e_3 + \cdots +x_m e_m) \nonumber \\
&= h_{\lambda ^{-1/d}Z(\lambda ^{1/d}sT^d)} (x_2 e_2 + x_3 e_3 + \cdots +x_m e_m) \nonumber \\
& \qquad \qquad +_\varphi h_{(1-\lambda) ^{-1/d}Z((1-\lambda) ^{1/d}sT^d)} (x_1 e_1 + x_3 e_3 + \cdots +x_m e_m).
\end{align}
Combining with the assumption (\ref{f1}) for $d-1$ and (\ref{126}) for $m-1$ and the $SL(n)$ covariance of $Z$, we have
\begin{align*}
&h_{\lambda ^{-1/d}Z(\lambda ^{1/d}s\hat{T}^{d-1})} (x_2 e_2 + x_3 e_3 + \cdots +x_m e_m) \\
&\qquad = h_{\lambda ^{-1/d}Z(\lambda ^{1/d}sT^d)} (x_2 e_2 + x_3 e_3 + \cdots +x_m e_m) \\
&\qquad \leq h_{(1-\lambda) ^{-1/d}Z((1-\lambda) ^{1/d}sT^d)} (x_1 e_1 + x_3 e_3 + \cdots +x_m e_m)
\end{align*}
for $x_1 >0 > x_2$ and
\begin{align*}
&h_{\lambda ^{-1/d}Z(\lambda ^{1/d}s\hat{T}^{d-1})} (x_2 e_2 + x_3 e_3 + \cdots +x_m e_m) \\
&\qquad = h_{(1-\lambda) ^{-1/d}Z((1-\lambda) ^{1/d}sT^d)} (x_1 e_1 + x_3 e_3 + \cdots +x_m e_m) \\
&\qquad \leq h_{\lambda ^{-1/d}Z(\lambda ^{1/d}sT^d)} (x_2 e_2 + x_3 e_3 + \cdots +x_m e_m)
\end{align*}
for $x_2 > 0 > x_1$.
In all, by Proposition \ref{pro2.1} (\rmnum{2}), we obtain that
\begin{align}\label{124}
h_{Z (sT^d)} (x_1 e_1 + x_2 e_2 +x_3 e_3 + \cdots +x_m e_m) = \max \{0, a sx_i : 1 \leq i \leq m \}.
\end{align}
Combining (\ref{149}), (\ref{150}), (\ref{124}) and the continuity of the support function, we get
\begin{align*}
h_{Z (sT^d)} (x_1e_1 + \cdots +x_m e_m) = h_{a sT^d} (x_1e_1 + \cdots +x_m e_m)
\end{align*}
for any $x_1,\cdots,x_m \in \mathbb{R}$.

\vskip 5pt
%By the $SL(n)$ covariance of $Z$, (\ref{f1}) holds true for any simplex in $\mathcal {T}_o^n$. Since $h_{a(T \cap H)} \leq h_{aT}$ and $h_{-a(T \cap H)} \leq h_{-aT}$ for any $a \geq 0$, $T \in \mathcal {T}_o^n$ and linear subspace $H$ in $\mathbb{R}^n$. The inclusion-exclusion principle for Orlicz valuations states that $Z$ is uniquely determined by $\{ ZT : T \in \mathcal {T}_o^n \}$. Thus (\ref{f1}) holds true for any $P \in \mathcal {P}_o^n$ since $Z$ defined by (\ref{f1}) is an Orlicz valuation.

By the $SL(n)$ covariance of $Z$, (\ref{f1}) holds true for any simplex in $\mathcal {T}_o^n=\mathcal{P}_{1}$.
Assume that (\ref{f1}) holds on $\mathcal{P}_{i-1}$, $i \geq 2$. For $P=P_1 \cup P_2 \in \mathcal{P}_{i}$, where $P_1,P_2 \in \mathcal{P}_{i-1}$ have disjoint relative interiors, by (\ref{303}), we have $P_1 \cap P_2 \in \mathcal{P}_{i-1}$. Hence,
$$h_{Z(P_1 \cap P_2)} = h_{a(P_1 \cap P_2)} \leq h_{aP_i}= h_{ZP_i}$$
for $i=1,2$. Therefore $h_{Z(P_1 \cup P_2)}$ is uniquely determined by (\ref{16}) and (\ref{11a}), namely,
\begin{align*}
h_{Z(P_1 \cup P_2)}(x)
= (h_{ZP_1}(x) +_\varphi h_{ZP_2}(x)) \varphi ^{-1} \left(1 - \varphi \left( \frac{h_{Z(P_1 \cap P_2)}(x)}{h_{ZP_1}(x) +_\varphi h_{ZP_2}(x)}\right)\right),
\end{align*}
if $h_{ZP_1}(x)$ and $h_{ZP_2}(x)$ are not both equal to $0$; and $h_{Z(P_1 \cup P_2)}(x) =0$ if $h_{ZP_1}(x) = h_{ZP_2}(x)=0$. Here $x \in \mathbb{R}^n$.
Also since $Z$ defined by (\ref{f1}) is an Orlicz valuation, we get that (\ref{f1}) holds on $\mathcal{P}_i$. Hence, we conclude that (\ref{f1}) holds on $\mathcal{P}_i$ inductively for any $i$. For any $P \in \mathcal {P}_o^n$, there exists an $i$ such that $P \in \mathcal{P}_{i}$. Thus (\ref{f1}) holds for all $P \in \mathcal {P}_o^n$.
\end{proof}
%%%%%%%%%%%%%%%%%%%%%%%%%%%%%%%%%%%%%%%%%%%%%%%%%%%%%%%%%%%%%%%%%%%%%%%%%%%%%%%%%%%%%%%%%%%%%%%%%

\section*{Acknowledgement}
The authors would like to thank Prof. Monika Ludwig for \text{suggestions} to improve the original draft.
The work of the authors was \text{supported}, in part, by the \text{National} \text{Natural} Science Foundation of China (11271244) and Shanghai Leading Academic Discipline Project (S30104). The first author was also supported by China Scholarship Council (CSC 201406890044).

%%%%%%%%%%%%%%%%%%%%%%%%%%%%%%%%%%%%%%%%%%%%%%%%%%%%%%%%%%%

\end{document}